\newtheorem{theorem}{Theorem}
\newtheorem{proposition}[theorem]{Proposition}
\newtheorem{lemma}[theorem]{Lemma}
\newtheorem{corollary}[theorem]{Corollary}
\theoremstyle{definition}
\newtheorem{definition}[theorem]{Definition}
\newtheorem{remark}[theorem]{Remark}
\numberwithin{equation}{section}
\numberwithin{theorem}{section}
\newcommand{\ddiv}{\operatorname{div}}
\newcommand{\card}{\operatorname{card}}
\newcommand{\T}{\mathcal{T}}
\newcommand{\Cor}{\boldsymbol{\mathcal C}}
\newcommand{\nei}{\mathsf{N}}
\newcommand{\wcba}{\bm{\mathrm{wcba}}}
\newcommand{\Rd}{{\mathbb{R}^d}}
\newcommand{\Cov}{\operatorname{Cov}}
\begin{document}
\pagestyle{fancy}
\fancyhead{}
\setlength{\headheight}{14pt}
\renewcommand{\headrulewidth}{0pt}

\fancyhead[c]{\small \it Numerical stochastic homogenization}
\title{A priori error analysis of a\\numerical stochastic homogenization method}

\author{%
       Julian Fischer\thanks{%
         Institute of Science and Technology Austria (IST Austria),
Am Campus 1, 
3400 Klosterneuburg, 
Austria}
         \and
       Dietmar Gallistl\thanks{%
         Institut f\"ur Mathematik,
         Universit\"at Jena,
         07737 Jena, Germany}
        \and 
       Daniel Peterseim\thanks{%
            Institut f\"ur Mathematik,     
            Universit\"at Augsburg,
            86159 Augsburg, Germany}
       }

\date{}

\maketitle

\begin{abstract}
This paper provides an a~priori error analysis
of a localized orthogonal decomposition method (LOD) for the numerical stochastic homogenization of a model random diffusion problem.
If the uniformly elliptic and bounded random coefficient field of the model problem
is stationary and satisfies
a quantitative decorrelation assumption
in form of the spectral gap inequality, then the expected $L^2$ error of the method can be estimated, up to logarithmic factors,
by $H+(\varepsilon/H)^{d/2}$; $\varepsilon$ being the small correlation 
length of the random coefficient and $H$ the width of the coarse finite element 
mesh that determines the spatial resolution. The proof bridges recent results of numerical homogenization and quantitative stochastic homogenization.
\end{abstract}

{\small
\noindent
\textbf{Keywords}
numerical homogenization, stochastic homogenization, quantitative theory, 
a~priori error estimates, uncertainty, model reduction

\noindent
\textbf{AMS subject classification}
35R60, 
65N12,  
65N15,  
65N30,  
73B27,  
74Q05  
}

\section{Introduction}

We study a prototypical random diffusion problem
\begin{equation*}
 -\ddiv \mathbf A \nabla \mathbf u = f
\end{equation*}
with homogeneous Dirichlet boundary conditions on a bounded Lipschitz
polytope.
The diffusion tensor $\mathbf A$ is a random coefficient field
with short correlation length $\varepsilon>0$.
We are interested in the approximation of this random partial differential equation by a deterministic finite 
element model and corresponding estimates of the expected $L^2$ error. The approximation is based on the localized orthogonal decomposition (LOD) approach to numerical homogenization beyond scale separation and periodicity \cite{MalqvistPeterseim2014,HenningPeterseim2013,Peterseim:2015}. This method is well established for deterministic applications ranging from  non-linear, time-dependent, multi-physics problems \cite{HMP13x,altmann2018computational,MP17,2017arXiv171009609V} to the stabilization of numerical wave scattering \cite{P17,GallistlPeterseim2015,PS17}. Apart from possible reinterpretations of the approach in the frameworks of domain decomposition \cite{Kornhuber.Peterseim.Yserentant:2016,peterseim2018domain} and Bayesian inference \cite{Owhadi2015,Owhadi2017}, the method can be rephrased as a discrete nonlocal integral operator
with a piecewise constant and exponentially decaying integral kernel thereby connecting the approach to the mathematical theory of homogenization \cite{GallistlPeterseim2017}. This particular perspective extends to the present stochastic homogenization problem in a natural way. In \cite{GallistlPeterseim2019} it is shown that the expectation of the discrete nonlocal integral representations of the realizations of the random operator provides an approximation of the stochastically homogenized operator. The error bounds of this LOD approach to stochastic homogenization contains the typical a~priori terms for the spatial discretization, quantified 
by the mesh size (or observation scale) $H$, and an a~posteriori
estimator that represents local fluctuations of the deterministic
upscaled model. Without any assumptions on the statistical structure of the coefficient $\mathbf{A}$, the a~priori quantification of the statistical error estimator seems hardly possible. However, the numerical experiments of \cite{GallistlPeterseim2019} revealed that
small values of 
the estimator are achieved given a certain scale separation in the 
stochastic variable, in particular for random coefficients at finite correlation length $\varepsilon$. This paper makes this plausible observation rigorous in an a~priori error analysis
that is explicit in
$H$ and $\varepsilon$. 

The key tools for our present paper are adopted from the recent quantitative theory of homogenization, in particular the framework of functional inequalities from \cite{GloriaOtto2011,GloriaOtto2012,
GloriaNeukammOtto2015,GloriaNeukammOttoPreprint} and the regularity theory from \cite{ArmstrongSmart2016,ArmstrongMourrat2016,
GloriaNeukammOttoPreprint,ArmstrongDaniel,FischerRaithel,DuerinckxOtto2019}.
We recall that the quantitative theory of stochastic homogenization has recently lead to optimal-order convergence rates for linear elliptic PDEs with random coefficient field \cite{GloriaOtto2011,GloriaOtto2012,ArmstrongKuusiMourrat2017,
GloriaOttoNew}, as well as to a corresponding result for monotone operators \cite{FischerNeukamm}.
For (non-optimal) convergence rates for further nonlinear problems, we refer to \cite{ArmstrongCardaliaguet,ArmstrongCardaliaguetSouganidis,
ArmstrongFergusonKuusi2}.
An overview of computational methods in stochastic homogenization
can be found in the review article
\cite{AnantharamanEtAl_Review2012}; see also 
\cite{ArmstrongKuusiNumerical,Gu2019Preprint}.
Numerical approaches to the computation of effective coefficients in stochastic homogenization have been devised e.\,g.\ in \cite{BourgeatPiatnitski2004,CancesEhrlacherEtAl,KhoromskaiaKhoromskijOtto,
MourratNumerical};
of particular interest in this context are variance reduction schemes, see \cite{LeBrisLegollMinvielle,BlancLeBrisLegoll} for several methods capable of substantially reducing the computational cost and \cite{FischerVarianceReduction} for a theoretical analysis.

Altogether, by merging the theories of LOD and quantitative stochastic homogenization we achieve rigorous a priori error bounds for a numerical stochastic homogenization in the spirit of LOD. If the uniformly elliptic and bounded random coefficient field $\mathbf A$ is 
stationary and satisfies a quantitative decorrelation assumption
in form of the spectral gap inequality then the numerical deterministic approximation $u_H$ of the random solution field $\mathbf u$ fulfills the relative error bound
\begin{equation}\label{e:mainintro}
 \frac{\mathbb{E}\bigl[ \| \mathbf{u}-u_H \|_{
			L^2(D)}^{2}\bigr]^{1/2}}{\|f\|_{L^2(D)}}
\lesssim
\lvert\log H\rvert^{4+3d/2}
\left(H + \left(\frac{\varepsilon}{H}\right)^{d/2}\right).
\end{equation}
Recall that $H>0$ refers to the mesh size of some possibly coarse simplicial finite element mesh $\mathcal{T}_H$ that underlies the LOD construction and $\varepsilon$ is the correlation length of $\mathbf{A}$. The estimate \eqref{e:mainintro} appears to be optimal in the sense of spatial approximability and CLT scaling (up to the logarithmic factor which is most probably pessimistic). This bound is in agreement with the numerical experiments of \cite{GallistlPeterseim2019} for a relevant class of random coefficients in the regime of short-range correlation. We shall emphasize that the method of \cite{GallistlPeterseim2019} itself is applicable without the structural assumptions of stationarity and quantitative decorrelation. However, the accuracy of a deterministic approximation of the random solution field is very limited beyond such assumptions. 

Apart from the mathematical justification of LOD for stochastic homogenization problems, the numerical analysis of this paper may have impact on the practical realization of more general multiscale representations of homogenized operators in stochastic homogenization \cite{FP18}. Moreover, given the aforementioned generalizations of both LOD and the analytical techniques, the present work may be the starting point for the numerical analysis of more involved stochastic homogenization problems beyond the prototypical linear elliptic model problem.

The remaining parts of this paper are structured
 as follows.
Section~\ref{s:model} specifies the model problem and Section~\ref{s:notation} reviews the numerical stochastic homogenization method of
\cite{GallistlPeterseim2019}. Section~\ref{s:erroranalysis} characterizes the admissible class of random diffusion coefficients and presents and proves the main results.

\bigskip
Standard notation on Lebesgue and Sobolev spaces
applies throughout this paper. 
The notation $a\lesssim b$ abbreviates $a\leq C b$ for some
constant $C$ that is independent of the mesh size and variations of the coefficient $\mathbf{A}$ but may depend on the shape of mesh elements and the contrast (i.e., the ratio of the uniform upper and lower bound) of $\mathbf{A}$ (cf. Assumption (A1) of Subsection~\ref{ss:keyassumptions}). The notation 
$a\approx b$ abbreviates $a\lesssim b\lesssim a$. For a matrix $\mathbf{A}$, we use the notation $\mathbf{A}^*$ to denote its transpose.
The duality product of $H^{-1}$ and $H^1_0$ is denoted by
$\langle\cdot,\cdot\rangle$.

\section{Model problem}\label{s:model}
Let $(\Omega,\mathcal{F},\mathbb{P})$ be a probability space with set of events $\Omega$, $\sigma$-algebra $\mathcal{F}\subseteq 2^\Omega$, and probability measure $\mathbb{P}$. 
The expectation operator is denoted by $\mathbb{E}$.
Let $D\subseteq\mathbb R^d$ for $d\in\{1,2,3\}$ be
a bounded Lipschitz polytope with a diameter of order $1$. 
For technical reasons in our proofs we assume that 
$D$ is a cuboid.
Let $\mathbf{A}$ be a uniformly elliptic and bounded random coefficient field
and let, for the sake of readability, 
$\mathbf{A}$ be pointwise symmetric.
The proofs can, however, be extended to the unsymmetric case.
For a deterministic right-hand side $f\in L^2(D)$
the model problem reads
\begin{equation}\label{e:modelstrong}
\left\{
\begin{aligned}
-\ddiv (\mathbf{A}(\omega)(x) \nabla \mathbf{u}(\omega)(x) ) &= f(x),&x\in D\\
\mathbf{u}(\omega)(x)&=0,&x\in \partial D
\end{aligned}
\right\}\quad\text{for almost all }\omega\in\Omega.
\end{equation}
The weak formulation of \eqref{e:modelstrong} is based on the 
Sobolev space $V:=H^1_0(D)$ 
and seeks a random field
$\mathbf{u}$ in the Hilbert space 
$L^2(\Omega;V)$
such that 
\begin{equation}\label{e:diff1drandweak2}
\int_\Omega\int_D(\mathbf{A}(\omega)\nabla\mathbf{u}(\omega)(x))\cdot\nabla\mathbf{v}(\omega)(x)\,dx\,d\mathbb{P}(\omega) = \int_\Omega\int_D f(x) \mathbf{v}(\omega)(x)\,dx\,d\mathbb{P}(\omega)
\end{equation}
holds for all $\mathbf{v}\in L^2(\Omega;V)$.
Well-posedness of this problem follows from coercivity of the bilinear form on the left-hand side. 

The numerical stochastic homogenization method introduced below can be applied to the model problem without further statistical assumptions on the diffusion coefficient $\mathbf A$. However, its a~priori error analysis based on the quantitative theory of stochastic homogenization will require the restriction to the class of stationary random coefficient fields $\mathbf A$ satisfying a spectral gap inequality. These structural assumptions will be made specific in Subsection~\ref{ss:keyassumptions}.

\section{Numerical stochastic homogenization method}\label{s:notation}
This section reviews the numerical stochastic homogenization method of \cite{GallistlPeterseim2019}. This requires the introduction of some basic notation on finite element spaces. 
\subsection{Finite element notation}
Let $\T_H$ denote a quasi-uniform and regular simplicial triangulation of 
the domain $D$. Introduce the global mesh size  
$H:=\max\{\operatorname{diam}(T):T\in\T_H\}$ of the quasi-uniform mesh $\T_H$. The corresponding $P_1$ finite element space of piecewise affine
and globally continuous functions that satisfy the homogeneous Dirichlet boundary condition
is denoted by $V_H\subseteq V$. This space will be used for the approximation of the solution. That is why we refer also to the corresponding discretization scale $H$ as the observation scale. For the approximation of integral kernels that define the numerical homogenization method we will use the space of piecewise constant functions
(resp.\ $d\times d$ matrix fields) which is denoted by 
$P_0(\T_H)$ (resp.\ $P_0(\T_H;\mathbb R^{d\times d})$).

The definition of localized numerical correctors requires the concept of patches. 
The neighborhood (or first-order patch) of a
given subdomain $S\subseteq\overline D$ is defined as
\begin{equation*}
\nei(S):=\operatorname{int}
          \Big(\bigcup\{T\in\T_H\,:\,T\cap\overline S\neq\emptyset \}\Big).
\end{equation*}
Furthermore, we introduce for any $\ell\geq 2$ the patch extensions
\begin{equation*}
\nei^1(S):=\nei(S)
\qquad\text{and}\qquad
\nei^\ell(S):=\nei(\nei^{\ell-1}(S)) .
\end{equation*}
Note that the number of elements in the $\ell$th-order patch in a quasi-uniform mesh scales like $\ell^d$. 

Let $I_H:V\to V_H$ be a 
surjective
quasi-interpolation operator that
acts as an $H^1$-stable and $L^2$-stable
quasi-local projection in the sense that
$I_H\circ I_H = I_H$ and that
for any $T\in\T_H$ and all $v\in V$ there holds
\begin{align}
\label{e:IHapproxstab}
H^{-1}\|v-I_H v\|_{L^2(T)} + \|\nabla I_H v \|_{L^2(T)}
&
\leq C_{I_H} \|\nabla v\|_{L^2(\nei(T))} 
\\
\label{e:IHstabL2}
\|I_H v\|_{L^2(T)}
&
\leq C_{I_H} \|v\|_{L^2(\nei(T))} .
\end{align}
For the discussion in this paper, we choose $I_H$ to be 
the concatenation of the $L^2$ projection to (possibly discontinuous)
piecewise affine functions over $\T_H$
and the averaging operator that maps a piecewise affine
function $p_H$ to $V_H$ by assigning to each interior
vertex $z$ the average of all values $p_H|_T(z)$ 
such that $z\in T\in\T_H$
\cite{GallistlPeterseim2019}.
We note that various other choices are possible.

\begin{remark}\label{r:submesh}
Let $\tilde \T_H$ be the triangulation generated from
$\T_H$ by two barycentric refinements
and let $\tilde V_H\subseteq V$ denote the first-order
finite element space with respect to $\tilde \T_H$.
For the above choice of $I_H$, it is easy to see that
for any $v\in V$ there exists $\tilde v\in\tilde V_H$
such that $I_H v = I_H \tilde v$
and $\|\tilde v\|_{L^2(\Omega)}\lesssim \|v\|_{L^2(\Omega)}$.
This remains true if $I_H$ is replaced by the operator
on $H^1(\Omega)$ that averages at boundary
vertices as well (i.e., not enforcing Dirichlet conditions).
The claim follows from the fact that any piecewise affine $p_H$
can be generated by the $L^2$ projection of a suitable
linear combination of bubble functions from $\tilde V_H$.
\end{remark}

\subsection{Numerical stochastic homogenization method}\label{ss:upscalingQloc}

The LOD approach of \cite{GallistlPeterseim2019} to stochastic homogenization
computes a quasi-local effective
coefficient as a discrete integral operator on finite element
spaces. Its construction is described in the following steps.

Following \cite{MalqvistPeterseim2014}, coarse and fine scales are characterized
through the quasi-interpolation operator $I_H$ introduced above. The 
space $W$ of fine-scale functions is defined by 
$W:=\operatorname{ker}I_H\subseteq V$.
Given a nonnegative integer 
\emph{oversampling parameter} $\ell$,
which throughout this paper is assumed to satisfy
$\ell\approx\lvert\log H\rvert$,
consider the 
$\ell$-th order extended patch $D_T:=\nei^\ell(T)$
of an element $T\in\T_H$.
The space of fine-scale functions that vanish outside $D_T$
is denoted by 
$W_{D_T}\subseteq W$. Note that this choice encodes a homogeneous Dirichlet boundary condition at the boundary of $D_T$.

Given the $j$-th Cartesian unit vector $e_j$ ($j=1,\dots,d$),
the localized element corrector 
$\mathbf{q}_{T,j}\in L^2(\Omega;W_{D_T})$ 
related to the element $T\in\T_H$ is defined as the solution
to the following localized problem (cell problem)
\begin{equation}\label{e:qTelementcorrELL}
\int_{D_T} \nabla w\cdot(\mathbf{A} \nabla \mathbf{q}_{T,j})\,dx
=
\int_{T} \nabla w \cdot(\mathbf{A} e_j)\,dx
\quad\text{for all }w\in W_{D_T}.
\end{equation}
Given $v_H\in V_H$, we define the correction operator 
$\Cor v_H\in L^2(\Omega;W)$
by
\begin{equation}\label{e:corexpansionELL}
\Cor  v_H
= \sum_{T\in\T_H} \sum_{j=1}^d (\partial_j v_H|_T)  \mathbf{q}_{T,j}.
\end{equation}
Note that the element correctors and the correction operator implicitly depend 
on the parameter $\ell$.
In the deterministic case it was shown in \cite{GallistlPeterseim2017}
how the use of corrected test functions 
leads to a sparse discrete integral operator.
In the stochastic setting, a similar representation with a
stochastic integral kernel is possible
\cite{GallistlPeterseim2019}, namely
with the piecewise-in-space constant matrix field
$\boldsymbol{\mathcal A}_H\in L^2(\Omega;P_0(\T_H\times\T_H;\mathbb{R}^{d\times d}))$
over $\T_H\times\T_H$,
which, for $T,K\in \T_H$, is defined by
\begin{equation}\label{e:AHdef}
(\boldsymbol{\mathcal A}_H|_{T,K})_{jk}
:= 
\frac{1}{|T|\,|K|}
\left(\delta_{T,K}\int_T \mathbf{A}_{jk}\,dx -
 e_j\cdot\int_K \mathbf{A}\nabla \mathbf{q}_{T,k}\,dx 
\right)
\end{equation}
($j,k=1,\dots,d$)
where $\delta$ is the Kronecker symbol. Note that the operator $\boldsymbol{\mathcal A}_H$ is sparse in the sense that
$\boldsymbol{\mathcal A}_H|_{T,K}$ equals zero for $T,K\in\T_H$ whenever $K\notin \nei^\ell(T)$, i.e., 
$\operatorname{dist}(T,K)\gtrsim \ell H$.

The kernel $\boldsymbol{\mathcal A}_H$ induces the discrete bilinear form 
$\boldsymbol{\mathfrak a}: V_H\times V_H\to L^2(\Omega;\mathbb R)$ given by
$$
\boldsymbol{\mathfrak a}(v_H,z_H):=
\int_D\int_D 
   \nabla v_H(x) \cdot ( \boldsymbol{\mathcal A}_H(x,y)\nabla z_H(y))\,dy\,dx 
$$
for any  $v_H,z_H\in V_H$. $V$-coercivity and continuity of the form $\boldsymbol{\mathfrak a}$
for any $\omega\in\Omega$
under the condition
$\ell\approx\mathcal{O}(\lvert\log H\rvert)$ were shown
in \cite{GallistlPeterseim2017}.

As pointed out in \cite{GallistlPeterseim2017}, 
there holds for all finite element functions $v_H,z_H\in V_H$ that
\begin{equation}\label{e:frakaeq}
\int_D \nabla v_H\cdot (\mathbf{A}\nabla(1-\Cor) z_H)\,dx
=
\mathfrak{a}(v_H,z_H).
\end{equation}
This shows how the form $\boldsymbol{\mathfrak a}$ is connected to a Petrov-Galerkin
variant of the method of \cite{MalqvistPeterseim2014,GallistlPeterseim2017}.

The final approximation by a deterministic model is based on the 
averaged integral kernel
$\bar{\mathcal A}_H :=\mathbb{E}[ \boldsymbol{\mathcal A}_H]$,
i.e.,
\begin{equation}\label{e:barAHdef}
(\bar{\mathcal A}_H|_{T,K})_{jk}
= 
\frac{1}{|T|\,|K|}
\left(\delta_{T,K}\int_T \mathbb{E}[\mathbf{A}_{jk}]\,dx -
 e_j\cdot\int_K \mathbb{E}[\mathbf{A}\nabla \mathbf{q}_{T,k}]\,dx  
\right) 
\end{equation}
for any two simplices $T,K\in\T_H$.
The corresponding deterministic bilinear form
$\bar{\mathfrak{a}}(\cdot,\cdot)$
is given by
$$
\bar{\mathfrak{a}}(v_H,z_H):=
\int_D\int_D 
   \nabla v_H(x) \cdot (\bar{ \mathcal A}_H(x,y)\nabla z_H(y))\,dy\,dx 
   \quad\text{for any } v_H,z_H\in V_H.
$$ 
Given this discrete deterministic approximation of the random partial differential operator, an approximation $u_H\in V_H$ of the solution $\mathbf u$ in the coarse finite element space $V_H$ solves
\begin{equation}\label{e:uHdef}
\bar{\mathfrak{a}}(u_H,v_H)
= (f,v_H)_{L^2(D)}
\quad\text{for all }v_H\in V_H.
\end{equation}
This can be phrased as a sparse linear system using the canonical nodal basis of the finite element space. Compared to the direct finite element discretization of \eqref{e:diff1drandweak2}, this system is slightly denser because the degrees of freedom associated with the interior vertices of $\T_H$ are directly coupled over distances of order $\ell H$. In this regard the system is similar to a $B$-spline discretization of order $\ell+1$.

Altogether, the numerical stochastic homogenization method consists of two steps. The first step is the assembling of the system matrix associated with \eqref{e:uHdef} which in turn requires the solution of the $d\times\card\T_H$ cell problems \eqref{e:qTelementcorrELL}. This task is often referred to as the offline phase which is independent of the right-hand side. In analogy to periodic deterministic coefficients \cite{GallistlPeterseim2015}, stationarity plus an appropriately chosen structured mesh $\T_H$ allow one to reduce the number of cell problems to $\mathcal{O}(\ell^d)$ (namely
$\mathcal{O}(1)$ representative interior problems plus 
all representative intersections of patches with the domain boundaries). 
We shall emphasize in this connection that, in contrast to analytical approaches to homogenization, the numerical method depends on the domain $D$ through the mesh and the boundary condition encoded in the cell problems. This dependence can be eliminated by replacing $\bar{\mathcal A}_H$ with a Toeplitz matrix resulting from solving a representative interior cell problem. In this spirit, one may as well approximate $\bar{\mathcal A}_H$ by a diagonal matrix resulting from row averaging to recover a classical finite element system that can be interpreted as the discretization of the homogenized PDE in certain cases \cite{GallistlPeterseim2017}. However simplification steps are beyond rigorous a~priori error control and will not be discussed further here.  

\section{Error analysis}\label{s:erroranalysis}
This section presents the novel a priori error analysis that combines arguments from the theories of LOD and quantitative stochastic homogenization. This requires some structural assumptions on the underlying random diffusion field.

\subsection{Key assumptions}\label{ss:keyassumptions}
We will impose three structural assumptions on the random coefficient field $\mathbf{A}$: uniform ellipticity and boundedness, stationarity, and quantitative decorrelation. These conditions are classical in stochastic homogenization, see for instance \cite{GloriaNeukammOttoPreprint}.  
\begin{itemize}
\item[(A1)] The random coefficient field $\mathbf{A}$ is uniformly elliptic and bounded, i.e., there exist constants $0<\lambda\leq \Lambda<\infty$ such that almost surely we have $\mathbf{A}(x)v\cdot v\geq \lambda |v|^2$ and $|\mathbf{A}(x)v| \leq \Lambda |v|$ for every $v\in \Rd$ and almost every $x\in \Rd$.
\item[(A2)] The random coefficient field $\mathbf{A}$ is \emph{stationary}, that is the law of shifted coefficient field $\mathbf{A}(\omega)(\cdot+x)$ coincides with the law of $\mathbf{A}$ for all $x\in \Rd$.
\item[(A3)] The random coefficient field $\mathbf{A}$ is subject to a quantitative decorrelation assumption on scales larger than $\varepsilon$ in form of the spectral gap inequality with correlation length $\varepsilon>0$, i.e., there exists a constant $\rho>0$ such that for any Fr\'echet differentiable random variable $F=F(\mathbf{A})$ the estimate
\begin{align}
\label{SpectralGap}
\mathbb{E}\big[\big|F-\mathbb{E}[F]\big|^2\big]
\leq \frac{\varepsilon^d}{\rho} \mathbb{E}\Bigg[\int_\Rd \bigg|\fint_{B_\varepsilon(x)} \bigg|\frac{\partial F}{\partial \mathbf{A}}(\tilde x)\bigg| \,d\tilde x\bigg|^2 \,dx\Bigg]
\end{align}
holds.
\end{itemize}

One example of a coefficient field satisfying the assumptions (A1)--(A3) are coefficient fields arising by applying a nonlinear function to a stationary Gaussian random field with integrable correlations. To be more explicit, let $k\in \mathbb{N}$ and let $Y:\Omega \times \Rd\rightarrow \mathbb{R}^k$ be a stationary Gaussian random field with integrable correlations in the sense
\begin{align*}
\int_\Rd \sup_{|\tilde x|=|x|} \big|\Cov[Y(\tilde x),Y(0)]\,\big| \,dx \lesssim \varepsilon^d.
\end{align*}
Furthermore, let $\xi:\mathbb{R}^k\rightarrow \mathbb{R}^{d\times d}$ be a $1$-Lipschitz function taking values in the space of matrices subject to the uniform ellipticity and boundedness conditions in (A1). Then the random field
\begin{align*}
\mathbf{A}(\omega)(x) := \xi(Y(\omega,x))
\end{align*}
satisfies the conditions (A1)--(A3) for some constant $\rho\gtrsim 1$.

\begin{remark}
For simplicity, we assume uniform ellipticity and boundedness of the
 coefficient field (condition (A1)). Since the ellipticity ratio of 
log-Gaussian random fields satisfies strong stochastic moment bounds 
we believe that similar results may be deduced in the case of 
log-Gaussian random fields or, more generally, random fields with moment
 bounds on
$\mathbb E[|A|^p]+\mathbb E[|A^{-1}|^p]$ 
for $p\gg1$ by using an adaptation of our strategy.

\end{remark}

\subsection{Review of a~posteriori error bounds}
We briefly review the $L^2$ error estimates from 
\cite{GallistlPeterseim2019} for the 
numerical method of the previous section which mark the starting point for the novel a~priori error analysis. These estimates require solely Assumption (A1) and no statistical assumptions. This generality results in an error estimate that  contains an a~posteriori term reflecting statistical errors that cannot be quantified a~priori without further assumptions. 

The error measure of interest is the 
$L^2(\Omega;L^2(D))$ norm. Besides the usual explicit convergence rates in terms of the 
mesh size $H$, the error bound contains an a~priori quantity
called worst-case best-approximation error, defined by
\begin{equation}\label{e:wcbadef}
\wcba(\mathbf{A}(\omega),\T_H) 
:=
\sup_{g\in L^2(D)\setminus\{0\}}
\inf_{v_H\in V_H} \frac{\|u(g,\mathbf{A}(\omega))- v_H\|_{L^2(D)}}{\|g\|_{L^2(D)}} \lesssim H
\end{equation}
where for $g\in L^2(D)$, $u(g,\mathbf{A}(\omega))\in V$ solves 
the deterministic 
model problem with diffusion coefficient $\mathbf{A}(\omega)$
and right-hand side $g$.
This quantity is always controlled from above by $H$, but it can
behave better (up to $H^2$) in certain regimes
\cite{GallistlPeterseim2017}.

The a~posteriori part in the error bound is referred to as
model error estimator.

\begin{definition}[model error estimator]\label{d:modelest}
	For any $T\in\T_H$, denote
	\begin{equation*}X(T) :=
	\max_{\substack{K\in\T_H\\ K\cap\nei^\ell(T)\neq\emptyset}}
	\lvert T\rvert 
	\, 
	\Big\lvert
	\boldsymbol{\mathcal A}_H|_{T,K}-\bar{ \mathcal A}_H|_{T,K}
	\Big\rvert  .
\end{equation*}
The model error estimator $\gamma$ is defined by
	\begin{equation*}
	\gamma:=\max_{T\in\T_H}\sqrt{\mathbb{E}[X(T)^2]}.
	\end{equation*}
\end{definition}
The model error estimator $\gamma$ coincides with the one introduced in \cite{GallistlPeterseim2019} up to some scaling factor that was used to improve the efficiency of the estimator in computations. Since the rescaling has no effect on the a priori error analysis it is not considered here. 
The following error estimate was shown in \cite[Prop. 9]{GallistlPeterseim2019}.
\begin{proposition}[error estimate for the quasilocal method]
	\label{p:errorestimateQlocal}
	Let $\ell\approx \lvert\log H\rvert$.
	Let $\mathbf{u}$ solve \eqref{e:diff1drandweak2} and let
	$u_H$ solve \eqref{e:uHdef} with right-hand side
	$f\in L^2(D)$.
	Then the estimate
	\begin{equation}\label{e:errEstQuasiEq1}
	\begin{aligned}
\sqrt{\mathbb{E}[ \| \mathbf{u}-u_H \|_{L^2(D)}^2] } 
	&
	\lesssim 
	(
	H^2 + \mathbb{E}[\wcba(\mathbf{A},\T_H)]
	+
	\ell^d  \gamma )
	\|f\|_{L^2(D)} 
	\\
	&
	\lesssim 
	(H + \ell^d  \gamma )  \|f\|_{L^2(D)}
	\end{aligned}
	\end{equation}
	holds with the model error estimator $\gamma$ from
	Definition~\ref{d:modelest}.
\end{proposition}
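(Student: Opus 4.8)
\emph{Strategy.}
The plan is to decouple the error into a deterministic discretization part and a genuinely stochastic model-error part. For almost every $\omega$, let $u_H^{\mathrm{ms}}(\omega)\in V_H$ be the \emph{realization-wise localized LOD approximation} of $\mathbf u(\omega)$, i.e.\ the discrete function characterized via \eqref{e:frakaeq} by $\boldsymbol{\mathfrak a}(\omega)(v_H,u_H^{\mathrm{ms}}(\omega))=(f,v_H)_{L^2(D)}$ for all $v_H\in V_H$; equivalently, $(1-\Cor)u_H^{\mathrm{ms}}(\omega)$ is the localized Petrov--Galerkin LOD approximation of $\mathbf u(\omega)$ for the coefficient $\mathbf A(\omega)$. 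Splitting $\mathbf u-u_H=(\mathbf u-u_H^{\mathrm{ms}})+(u_H^{\mathrm{ms}}-u_H)$ and using the triangle inequality in $L^2(\Omega;L^2(D))$, it suffices to bound the first term by $(H^2+\mathbb E[\wcba(\mathbf A,\T_H)])\|f\|_{L^2(D)}$ and the second by $\ell^d\gamma\,\|f\|_{L^2(D)}$.

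\emph{Discretization part.}
For the first term I would invoke the deterministic a~priori theory of the LOD \cite{MalqvistPeterseim2014,GallistlPeterseim2017}, applied pathwise to $\mathbf A(\omega)$: it yields $\|\mathbf u(\omega)-u_H^{\mathrm{ms}}(\omega)\|_{L^2(D)}\lesssim(H^2+\wcba(\mathbf A(\omega),\T_H)+r_\ell)\|f\|_{L^2(D)}$, where the localization error $r_\ell$ is of the form $(\text{polynomial in }\ell)\times e^{-c\ell}$ and is therefore absorbed into $H^2$ by the choice $\ell\approx|\log H|$. Here the first-order term appears as $\wcba(\mathbf A(\omega),\T_H)$ rather than a generic $H$, because the $L^2$ error of the LOD is governed by the best approximation of solution fields in $V_H$. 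Taking the $L^2(\Omega)$ norm and using $\wcba(\mathbf A(\omega),\T_H)\lesssim H$ (which also makes the distinction between $\mathbb E[\wcba]$ and $\mathbb E[\wcba^2]^{1/2}$ harmless, both being $\lesssim H$) gives the asserted bound, which in particular is $\lesssim H\|f\|_{L^2(D)}$.

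\emph{Model-error part.}
Since $\boldsymbol{\mathfrak a}(\omega)$ is $V$-coercive and continuous on $V_H$ uniformly in $\omega$ (for $\ell\approx|\log H|$, by \cite{GallistlPeterseim2017}), so is $\bar{\mathfrak a}=\mathbb E[\boldsymbol{\mathfrak a}]$; as $u_H^{\mathrm{ms}}(\omega)$ and $u_H$ solve discrete problems with the same data $f$ but with the forms $\boldsymbol{\mathfrak a}(\omega)$ and $\bar{\mathfrak a}$, a Strang-type argument based on coercivity of $\bar{\mathfrak a}$ gives $\|u_H^{\mathrm{ms}}(\omega)-u_H\|_V\lesssim\sup_{w_H\in V_H\setminus\{0\}}|(\boldsymbol{\mathfrak a}(\omega)-\bar{\mathfrak a})(u_H^{\mathrm{ms}}(\omega),w_H)|/\|\nabla w_H\|_{L^2(D)}$ (up to the routine Petrov--Galerkin bookkeeping of which slot the discrete solution occupies, which for symmetric $\mathbf A$ merely involves the adjoint discrete problem). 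The difference $\boldsymbol{\mathfrak a}(\omega)-\bar{\mathfrak a}$ sees only the kernel perturbation $\boldsymbol{\mathcal A}_H(\omega)-\bar{\mathcal A}_H$, which is supported on pairs $(T,K)$ with $K\cap\nei^\ell(T)\neq\emptyset$, i.e.\ on a band of width $\lesssim\ell^d$ elements. Using the elementwise bound $|T|\,|\boldsymbol{\mathcal A}_H|_{T,K}-\bar{\mathcal A}_H|_{T,K}|\leq X(T)$, local quasi-uniformity $|T|\approx|K|\approx H^d$, the affine-element scaling $\|\nabla v_H\|_{L^2(T)}^2\approx H^d\lvert\nabla v_H|_T\rvert^2$, and Cauchy--Schwarz over $T$ and over the $\lesssim\ell^d$ neighbours $K$ of each $T$, one arrives at $|(\boldsymbol{\mathfrak a}(\omega)-\bar{\mathfrak a})(u_H^{\mathrm{ms}}(\omega),w_H)|\lesssim\ell^d\bigl(\sum_{T\in\T_H}X(T)^2\|\nabla u_H^{\mathrm{ms}}(\omega)\|_{L^2(T)}^2\bigr)^{1/2}\|\nabla w_H\|_{L^2(D)}$, hence $\|u_H^{\mathrm{ms}}(\omega)-u_H\|_{L^2(D)}\lesssim\ell^d\bigl(\sum_{T}X(T)^2\|\nabla u_H^{\mathrm{ms}}(\omega)\|_{L^2(T)}^2\bigr)^{1/2}$. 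Taking the expectation, using the almost-sure stability $\|\nabla u_H^{\mathrm{ms}}(\omega)\|_{L^2(D)}\lesssim\|f\|_{L^2(D)}$ (from coercivity of $\boldsymbol{\mathfrak a}(\omega)$) to separate the solution from the random weight, and the definition $\gamma=\max_T\sqrt{\mathbb E[X(T)^2]}$, one obtains $\sqrt{\mathbb E[\|u_H^{\mathrm{ms}}-u_H\|_{L^2(D)}^2]}\lesssim\ell^d\gamma\,\|f\|_{L^2(D)}$. Combining the two parts gives \eqref{e:errEstQuasiEq1}, the second inequality there following from $\wcba\lesssim H$ and $H^2\lesssim H$.

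\emph{Main obstacle.}
The crux is the model-error part: one must estimate, uniformly in the mesh, the $L^2(\Omega)$-response of the discrete solution to the random perturbation $\boldsymbol{\mathcal A}_H-\bar{\mathcal A}_H$ of the homogenized kernel so that in the end only the worst-case local second moment $\gamma$ and the combinatorial patch factor $\ell^d$ survive. It is essential here to exploit the banded sparsity of $\boldsymbol{\mathcal A}_H$ (otherwise the bandwidth would enter as $\card\T_H$ rather than $\ell^d$) and the pathwise $H^1$-stability of $u_H^{\mathrm{ms}}$, so that the random weight $X(T)$ can be disentangled from the bulk contribution of $\|\nabla u_H^{\mathrm{ms}}\|_{L^2(T)}$; by contrast, the discretization part is a direct pathwise application of existing LOD a~priori estimates.
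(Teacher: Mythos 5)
Your decomposition $\mathbf u-u_H=(\mathbf u-u_H^{\mathrm{ms}})+(u_H^{\mathrm{ms}}-u_H)$ and the bandwidth counting ($\ell^{d/2}$ from Cauchy--Schwarz over the $\lesssim\ell^d$ neighbours, another $\ell^{d/2}$ from finite patch overlap) are the right ingredients, and the discretization part is essentially the standard pathwise LOD estimate; for what it is worth, the paper does not reprove this proposition itself but cites \cite[Prop.~9]{GallistlPeterseim2019}, so I am assessing your argument on its own terms.

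There is, however, a genuine gap in the model-error part. You invoke coercivity of the \emph{deterministic} form $\bar{\mathfrak a}$ and arrive at
\[
\|u_H^{\mathrm{ms}}(\omega)-u_H\|_{L^2(D)}\lesssim \ell^d\Bigl(\sum_{T\in\T_H}X(T)^2\,\|\nabla u_H^{\mathrm{ms}}(\omega)\|_{L^2(T)}^2\Bigr)^{1/2},
\]
i.e.\ with the \emph{random} solution $u_H^{\mathrm{ms}}(\omega)$ in the Strang numerator. The two random factors $X(T)$ and $u_H^{\mathrm{ms}}(\omega)$ are correlated, and the subsequent step — ``use the a.s.\ stability $\|\nabla u_H^{\mathrm{ms}}(\omega)\|_{L^2(D)}\lesssim\|f\|$ to separate the solution from the random weight, then use $\gamma=\max_T\sqrt{\mathbb E[X(T)^2]}$'' — does not go through. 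If you pull the global $H^1$ bound out pathwise you are left with $\mathbb E\bigl[\max_T X(T)^2\bigr]$, not $\max_T\mathbb E[X(T)^2]=\gamma^2$; and if you instead bound each $\|\nabla u_H^{\mathrm{ms}}\|_{L^2(T)}$ by $\|f\|$ and sum, you pick up $\sum_T\mathbb E[X(T)^2]\lesssim\card(\T_H)\,\gamma^2\approx H^{-d}\gamma^2$. Either way you lose a factor growing like a power of $H^{-1}$ (or at best like $\sqrt{\log\card\T_H}$ under an extra sub-Gaussian hypothesis that you have not assumed), so you do not obtain $\ell^d\gamma$.

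The fix is to swap which coercivity you use so that the \emph{deterministic} $u_H$ appears in the Strang numerator. Writing (modulo the slot/adjoint bookkeeping coming from the Petrov--Galerkin structure \eqref{e:frakaeq}, which you already flagged)
\[
\boldsymbol{\mathfrak a}(\omega)\bigl(u_H^{\mathrm{ms}}(\omega)-u_H,\,v_H\bigr)
=(f,v_H)-\boldsymbol{\mathfrak a}(\omega)(u_H,v_H)
=\bigl(\bar{\mathfrak a}-\boldsymbol{\mathfrak a}(\omega)\bigr)(u_H,v_H),
\]
and invoking the pathwise coercivity of $\boldsymbol{\mathfrak a}(\omega)$ (valid for $\ell\approx|\log H|$ by \cite{GallistlPeterseim2017}), the bandwidth argument now yields
\[
\|\nabla(u_H^{\mathrm{ms}}(\omega)-u_H)\|_{L^2(D)}
\lesssim \ell^d\Bigl(\sum_{T\in\T_H}X(T)^2\,\|\nabla u_H\|_{L^2(T)}^2\Bigr)^{1/2},
\]
with the deterministic weights $\|\nabla u_H\|_{L^2(T)}^2$. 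Taking the expectation now factors cleanly: $\mathbb E\bigl[\sum_T X(T)^2\|\nabla u_H\|_{L^2(T)}^2\bigr]=\sum_T\mathbb E[X(T)^2]\,\|\nabla u_H\|_{L^2(T)}^2\leq\gamma^2\|\nabla u_H\|_{L^2(D)}^2\lesssim\gamma^2\|f\|_{L^2(D)}^2$, giving $\sqrt{\mathbb E[\|u_H^{\mathrm{ms}}-u_H\|_{L^2(D)}^2]}\lesssim\ell^d\gamma\,\|f\|_{L^2(D)}$ as required. So the obstacle you identified — disentangling the random weight from the solution — is real, but the resolution is to choose the coercivity so that the deterministic object survives, rather than to rely on a pathwise stability bound for the random one.
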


We end this paragraph by noting a technical perturbation
result that will later be used in the proof of 
Theorem~\ref{t:varestimate}.
\begin{lemma}\label{l:perturb}
 Let $T,K\in\T_H$ and $j,k\in\{1,\dots,d\}$.
Then there exist a box $Q\subseteq D_T$
and some $m\lesssim\ell$ such that
the patches satisfy the inclusion
$\nei^\ell(T) \subseteq \overline{Q} \subseteq \overline{\nei^m(T)}.$
Let $\mathbf{q}_{T,j}^Q\in W_Q$ solve \eqref{e:qTelementcorrELL}
with $D_T$ replaced by $Q$
and let 
$(\boldsymbol{\mathcal A}^Q_H|_{T,K})_{jk}$
be defined by \eqref{e:AHdef} with $\mathbf{q}_{T,j}$
replaced by $\mathbf{q}_{T,j}^Q$.
Then, the following perturbation result holds
almost surely
\begin{equation*}
 |(\boldsymbol{\mathcal A}_H|_{T,K})_{jk}
 -
 (\boldsymbol{\mathcal A}^Q_H|_{T,K})_{jk}|
 \lesssim
 \frac{H}{|T|}.
\end{equation*}
\end{lemma}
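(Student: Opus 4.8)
The plan is to build $Q$ geometrically and then to control $\mathbf q_{T,k}-\mathbf q_{T,k}^Q$ in the energy norm by a C\'ea-type estimate combined with the exponential decay of localized correctors. For the construction of $Q$, I would take $Q$ to be the intersection of $D$ with the smallest axis-parallel box containing $\nei^\ell(T)$: since $D$ is a cuboid this intersection is again a box, it contains $\nei^\ell(T)$, and by quasi-uniformity of $\T_H$ each of its side lengths is $\lesssim\ell H$, so that $\overline Q\subseteq\overline{\nei^m(T)}$ for some $m\lesssim\ell$. (The box structure of $Q$ is not used in the present estimate — any $Q$ with $\nei^\ell(T)\subseteq\overline Q\subseteq\overline{\nei^m(T)}$, $m\lesssim\ell$, would do — but it is convenient for the later application in Theorem~\ref{t:varestimate}.) In particular $W_{D_T}\subseteq W_Q$, and since the right-hand side of \eqref{e:qTelementcorrELL} is the same linear functional $w\mapsto\int_T\nabla w\cdot(\mathbf A e_k)$ for both domains, $\mathbf q_{T,k}$ is precisely the $\mathbf A$-orthogonal Galerkin projection of $\mathbf q_{T,k}^Q$ onto $W_{D_T}$. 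Hence, by Galerkin orthogonality and (A1),
\[
 \|\nabla(\mathbf q_{T,k}^Q-\mathbf q_{T,k})\|_{L^2(D)}
 \;\leq\;\sqrt{\Lambda/\lambda}\;\inf_{w\in W_{D_T}}\|\nabla(\mathbf q_{T,k}^Q-w)\|_{L^2(D)} .
\]

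Next I would bound the best-approximation error by the standard LOD cutoff construction. Pick a Lipschitz cutoff $\eta$ with $\eta\equiv1$ on $\nei^{\ell-3}(T)$, $\eta\equiv0$ outside $\nei^{\ell-1}(T)$, and $\|\nabla\eta\|_{L^\infty}\lesssim H^{-1}$, and set $w:=(1-I_H)(\eta\,\mathbf q_{T,k}^Q)$. Then $w\in W=\operatorname{ker}I_H$, and by the quasi-locality of $I_H$ its support lies in $\nei(\nei^{\ell-1}(T))=\nei^\ell(T)=D_T$, so $w\in W_{D_T}$ is admissible. Using $I_H\mathbf q_{T,k}^Q=0$ one rewrites $\mathbf q_{T,k}^Q-w=(1-I_H)\bigl((1-\eta)\mathbf q_{T,k}^Q\bigr)$, where $(1-\eta)\mathbf q_{T,k}^Q$ is supported in $D\setminus\nei^{\ell-3}(T)$. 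Applying the $H^1$-stability of $1-I_H$, and on the thin band where $\nabla\eta\neq0$ the approximation property \eqref{e:IHapproxstab} together with $I_H\mathbf q_{T,k}^Q=0$ (to trade the $L^2$-norm of $\mathbf q_{T,k}^Q$ for $H$ times the $L^2$-norm of its gradient), one obtains $\inf_{w\in W_{D_T}}\|\nabla(\mathbf q_{T,k}^Q-w)\|_{L^2(D)}\lesssim\|\nabla\mathbf q_{T,k}^Q\|_{L^2(D\setminus\nei^{\ell-4}(T))}$. Now I would invoke the exponential decay of localized correctors from \cite{MalqvistPeterseim2014,HenningPeterseim2013,Peterseim:2015}, $\|\nabla\mathbf q_{T,k}^Q\|_{L^2(Q\setminus\nei^r(T))}\lesssim e^{-c_1 r}\|\nabla\mathbf q_{T,k}^Q\|_{L^2(Q)}$, together with the energy bound $\|\nabla\mathbf q_{T,k}^Q\|_{L^2(Q)}\lesssim|T|^{1/2}$ (obtained by testing \eqref{e:qTelementcorrELL} with $w=\mathbf q_{T,k}^Q$ and using (A1)). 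With $r=\ell-4\approx\ell\approx|\log H|$ and the implicit constant in $\ell\approx|\log H|$ chosen large enough that $e^{-c_1\ell}\lesssim H$, this yields
\[
 \|\nabla(\mathbf q_{T,k}^Q-\mathbf q_{T,k})\|_{L^2(D)}\;\lesssim\;H\,|T|^{1/2}.
\]

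Finally, for any $K\in\T_H$ only the corrector contributions in \eqref{e:AHdef} change when $\mathbf q_{T,k}$ is replaced by $\mathbf q_{T,k}^Q$, so that
\[
 \bigl|(\boldsymbol{\mathcal A}_H|_{T,K})_{jk}-(\boldsymbol{\mathcal A}^Q_H|_{T,K})_{jk}\bigr|
 =\frac{1}{|T|\,|K|}\Bigl|e_j\cdot\int_K\mathbf A\nabla(\mathbf q_{T,k}-\mathbf q_{T,k}^Q)\,dx\Bigr|
 \leq\frac{\Lambda}{|T|\,|K|^{1/2}}\|\nabla(\mathbf q_{T,k}-\mathbf q_{T,k}^Q)\|_{L^2(K)}
 \lesssim\frac{H\,|T|^{1/2}}{|T|\,|K|^{1/2}}\approx\frac{H}{|T|},
\]
where the last step uses $|T|\approx|K|\approx H^d$ from quasi-uniformity. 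I expect the main difficulty here to be one of careful bookkeeping rather than of substance: the exponential decay of localized correctors is a well-established cornerstone of LOD theory, and the only step requiring genuine care is verifying that $(1-I_H)(\eta\,\mathbf q_{T,k}^Q)$ is supported inside $D_T$, which forces $\eta$ to be cut off a full mesh layer before $\partial\nei^\ell(T)$ so that the one-layer support spreading of $I_H$ stays within $\nei^\ell(T)$. Note that the estimate is deterministic and holds $\omega$-wise, using only (A1); the correlation length $\varepsilon$ and the statistical assumptions (A2)--(A3) play no role.
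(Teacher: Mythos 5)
Your proof is correct and follows essentially the same route as the paper's: exploit that $W_{D_T}\subseteq W_Q$ so the two correctors are nested Galerkin projections, then invoke the exponential decay of localized correctors to bound the localization error by $H\,|T|^{1/2}$, and finish with the Cauchy--Schwarz/quasi-uniformity computation. The only organizational difference is that the paper passes through the auxiliary corrector $\widehat{\mathbf q}_{T,j}$ on the enlarged patch $\widehat D_T=\nei^m(T)$ (using that both $\mathbf q_{T,j}$ and $\mathbf q_{T,j}^Q$ are Galerkin projections of $\widehat{\mathbf q}_{T,j}$ and citing the standard LOD localization-error bound directly), whereas you directly realize $\mathbf q_{T,k}$ as the $\mathbf A$-orthogonal projection of $\mathbf q_{T,k}^Q$ onto $W_{D_T}$ and rederive the bound via C\'ea plus a cutoff; these are two faces of the same argument.
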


\begin{proof}
Denote $\widehat D_T:=\nei^m(T)$ and as before $D_T=\nei^\ell(T)$.
 The claimed inclusion relation 
follows from the quasi-uniformity of $\T_H$
and the assumption that the domain $D$ is rectangular.
From the definition \eqref{e:AHdef} applied to $D_T$ and $Q$
and the Hölder inequality
it follows that
\begin{equation*}
 |(\boldsymbol{\mathcal A}_H|_{T,K})_{jk}
 -
 (\boldsymbol{\mathcal A}^Q_H|_{T,K})_{jk}|
\lesssim
 |T|^{-1}|K|^{-1/2} 
 \|\nabla(\mathbf{q}_{T,j}-\mathbf{q}_{T,j}^Q)\|_{L^2(D_T)}.
\end{equation*}
By extending 
$\mathbf{q}_{T,j}$ and $\mathbf{q}_{T,j}^Q$ by zero to
functions from $W_{\widehat{D}_T}$
and noting that both functions are Galerkin projections
of the corrector
$\widehat{\mathbf{q}}_{T,j}\in W_{\widehat{D}_T}$ defined through
\eqref{e:qTelementcorrELL} with respect to
$\widehat{D}_T$, we deduce
\begin{equation*}
 \|\nabla(\mathbf{q}_{T,j}-\mathbf{q}_{T,j}^Q)\|_{L^2(D_T)}
 \lesssim
 \|\nabla(\mathbf{q}_{T,j}-\widehat{\mathbf{q}}_{T,j})\|_{L^2(\widehat D_T)}
.
\end{equation*}
By an application of the exponential decay argument from
\cite{MalqvistPeterseim2014,HenningPeterseim2013}
it can be shown that the right-hand side is controlled
by $H \, |T|^{1/2}$. The combination of the foregoing estimates
and the shape regularity yield the assertion.
\end{proof}

\subsection{Main result}
The central result of this paper is the a~priori quantification of the a~posteriori model estimator under the assumptions of stationarity and quantitative decorrelation. 
\begin{theorem}[a~priori error estimate for $\gamma$]
       \label{t:gammaestimate}
Let the diffusion tensor $\mathbf A$ satisfy assumptions
(A1)--(A3) from Subsection~\ref{ss:keyassumptions} and let $\ell\approx \lvert\log H\rvert$.
Then, $\gamma$ from Definition~\ref{d:modelest} satisfies
\begin{align*}
 \gamma
\lesssim
\lvert\log H\rvert^{4+d/2}
\left(H + \left(\frac{\varepsilon}{H}\right)^{d/2}\right).
\end{align*}
\end{theorem}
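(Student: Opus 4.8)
The plan is to bound $\gamma = \max_T \sqrt{\mathbb{E}[X(T)^2]}$ by estimating, for each fixed pair $T, K \in \T_H$ with $K \cap \nei^\ell(T) \neq \emptyset$, the quantity $\mathbb{E}\big[\lvert T\rvert^2 \lvert \boldsymbol{\mathcal A}_H|_{T,K} - \bar{\mathcal A}_H|_{T,K}\rvert^2\big]$, which is the variance of the random matrix $\lvert T\rvert \boldsymbol{\mathcal A}_H|_{T,K}$. The natural tool is the spectral gap inequality (A3): writing $F = (\boldsymbol{\mathcal A}_H|_{T,K})_{jk}$ (times $\lvert T\rvert$), we have from \eqref{e:AHdef} that $F$ is essentially $\lvert K\rvert^{-1} e_j \cdot \int_K \mathbf{A}\nabla\mathbf{q}_{T,k}\,dx$, and we need to compute its Fréchet derivative with respect to $\mathbf{A}$. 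First I would carry out this functional derivative: perturbing $\mathbf{A}$ by $\delta\mathbf{A}$ changes both the explicit $\mathbf{A}$ factor and the corrector $\mathbf{q}_{T,k}$, and the latter change is governed by the linearized cell problem \eqref{e:qTelementcorrELL}. A duality/adjoint argument — introducing the adjoint corrector associated with the functional $w \mapsto \lvert K\rvert^{-1}\int_K e_j \cdot \mathbf{A}\nabla w$, which is again a localized corrector-type problem on $D_T$ — lets one express $\partial F/\partial\mathbf{A}(\tilde x)$ pointwise in terms of $\nabla\mathbf{q}_{T,k}(\tilde x)$ and the gradient of this adjoint function $g_{T,K,j}(\tilde x)$, both supported in $D_T$.

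Next I would insert this representation into the right-hand side of \eqref{SpectralGap}. After averaging over $B_\varepsilon(x)$ and integrating over $x$, one obtains (by Cauchy–Schwarz and the finite overlap of $\varepsilon$-balls) a bound of the form $\mathrm{Var}(F) \lesssim \varepsilon^d \,\mathbb{E}\big[\int_{D_T} \lvert\nabla\mathbf{q}_{T,k}\rvert^2 \lvert\nabla g_{T,K,j}\rvert^2\,dx\big]$ up to the $\varepsilon$-averaging, plus lower-order contributions from the explicit-$\mathbf{A}$ term. This is where the two homogenization ingredients enter. The deterministic energy estimate for cell problems gives $\|\nabla\mathbf{q}_{T,k}\|_{L^2(D_T)} \lesssim \lvert T\rvert^{1/2}$ and likewise $\|\nabla g_{T,K,j}\|_{L^2(D_T)} \lesssim \lvert K\rvert^{1/2}$, which alone would only yield $\mathrm{Var}(F) \lesssim \varepsilon^d \lvert T\rvert \lvert K\rvert / \varepsilon^d \cdot (\ldots)$ — not good enough. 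To get the $(\varepsilon/H)^d$ scaling one must exploit that the product $\lvert\nabla\mathbf{q}_{T,k}\rvert\lvert\nabla g_{T,K,j}\rvert$ is controlled in a better norm: here the large-scale (Meyers-type / $C^{0,1}$) regularity theory for the corrector from \cite{GloriaNeukammOttoPreprint,ArmstrongSmart2016,ArmstrongMourrat2016} provides higher integrability $\|\nabla\mathbf{q}_{T,k}\|_{L^p} \lesssim \lvert T\rvert^{1/p}$ for some $p>2$ (in fact up to $p$ large, locally, away from $T$, with the decay from Lemma-type estimates), and by Hölder the $\varepsilon$-averaged integral then scales like $\varepsilon^d \cdot \varepsilon^{-d}(\varepsilon^d)^{2/p'} \ldots$ — more precisely the key gain is that $\int_{D_T}$ of a product of two gradients that each concentrate near $T$, respectively $K$, over regions of size $\sim H^d$, yields $H^{2d}$ rather than the patch volume, so that after dividing by $\lvert K\rvert^2 \sim H^{2d}$ from the definition of $F$ and multiplying by the prefactor, the variance of $\lvert T\rvert\boldsymbol{\mathcal A}_H|_{T,K}$ comes out $\lesssim \varepsilon^d H^{-d} = H^d (\varepsilon/H)^d$, i.e.\ $\sqrt{\mathbb{E}[X(T)^2]} \lesssim H^{d/2}(\varepsilon/H)^{d/2}$ — wait, the clean statement is rather $(\varepsilon/H)^{d/2}$ after the correct bookkeeping of the $\lvert T\rvert$ normalization in $X(T)$; I would track these volume powers carefully at this step. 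The oversampling truncation contributes the additional $H$ term via Lemma~\ref{l:perturb}, and the logarithmic powers $\lvert\log H\rvert^{4+d/2}$ arise from $\ell \approx \lvert\log H\rvert$ through the number $\sim\ell^d$ of nonzero entries summed over and through $\ell$-dependent constants in the regularity estimates.

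The main obstacle I anticipate is the regularity step: making rigorous that the corrector gradient $\nabla\mathbf{q}_{T,k}$ enjoys, with high stochastic moments, the higher integrability and the spatial decay away from $T$ needed to turn the crude energy bound into the CLT-scaling $\varepsilon^{d/2}$, uniformly in the localization parameter $\ell$ and in the patch $D_T$ (which carries its own artificial Dirichlet boundary). One must combine the large-scale Lipschitz/Meyers estimates with the exponential decay of the localized corrector (as in Lemma~\ref{l:perturb}) and control the interaction between the two corrector-type objects $\mathbf{q}_{T,k}$ and $g_{T,K,j}$ living on overlapping but differently-centered supports; handling the case $\mathrm{dist}(T,K) \sim \ell H$ large, where the product of the two concentrated gradients is genuinely small, versus $T = K$, where it is not, will require a dyadic decomposition of $D_T$ and careful summation. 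A secondary technical point is justifying Fréchet differentiability of $F$ as required by (A3) and controlling the contribution of the $\varepsilon$-ball average $\fint_{B_\varepsilon(x)}\lvert\partial F/\partial\mathbf{A}\rvert$ rather than the pointwise derivative, which is harmless but needs the finite-overlap argument stated above.
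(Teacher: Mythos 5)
Your outer framework coincides with the paper's: pass through Lemma~\ref{l:perturb} to replace $D_T$ by a box at the cost of an $H$ term, apply the spectral gap inequality \eqref{SpectralGap} to the entries $(\boldsymbol{\mathcal A}_H|_{T,K})_{jk}$, compute the Fr\'echet derivative via a duality argument with an adjoint corrector (the paper's $\boldsymbol{\xi}_{T,K,j}$ solving \eqref{e:auxpde}), and finish via $\card(J)\lesssim\ell^d$. Up to and including the explicit $L^2$ representation of $\partial(\boldsymbol{\mathcal A}_H|_{T,K})_{jk}/\partial\mathbf{A}$ you are essentially reproducing Theorem~\ref{t:varestimate}.

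The gap is exactly where you flag your own uncertainty: how to upgrade the crude energy bound into the CLT variance scaling. Your sketch points toward a mechanism — spatial decay of $\nabla\mathbf{q}_{T,k}$ away from $T$, concentration of the product $\lvert\nabla\mathbf{q}_{T,k}\rvert\lvert\nabla\boldsymbol{\xi}_{T,K,j}\rvert$, a dyadic split between near and far regions, and exponential decay \`a la Lemma~\ref{l:perturb} — that is not what the paper does and that you would find hard to close. The paper's argument does not exploit concentration near $T$ or $K$ at all; after Jensen and Cauchy--Schwarz it reduces the spectral-gap bound to the fourth-moment estimates
\[
\int_{D_T}\mathbb{E}\Big[\Big(\textstyle\fint_{B_\varepsilon(x)}\lvert\nabla\mathbf{q}_T\rvert^2\Big)^2\Big]\,dx \lesssim \ell^8\lvert T\rvert,
\qquad
\int_{D_T}\mathbb{E}\Big[\Big(\textstyle\fint_{B_\varepsilon(x)}\lvert\nabla\boldsymbol{\xi}_{T,K}\rvert^2\Big)^2\Big]\,dx \lesssim \ell^8\lvert K\rvert,
\]
which are \emph{global on the patch} and already give the desired $\varepsilon^d\ell^8/(\lvert T\rvert^{3/2}\lvert K\rvert^{3/2})\approx \ell^8\lvert T\rvert^{-2}(\varepsilon/H)^d$ once combined with the $\lvert T\rvert^{-2}\lvert K\rvert^{-2}$ prefactor. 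You also miss the step that makes these fourth-moment bounds available in the first place: the corrector $\mathbf{q}_{T,j}$ lives in the constrained space $W_{D_T}=\ker I_H\cap H^1_0(D_T)$, so it does not directly satisfy a divergence-form PDE to which annealed regularity applies. The paper's Lemma~\ref{l:schurcomplement} resolves this by a Schur-complement/Lagrange-multiplier argument, showing $\mathbf{q}_{T,j}$ solves $\nabla\cdot(\mathbf{A}\nabla\mathbf{q}_{T,j})=\nabla\cdot(\mathbf{A}e_j\chi_T+b_{T,j})$ with $\int_{D_T}\lvert b_{T,j}\rvert^{9/2}\lesssim\ell^8\lvert T\rvert$ — the bound on $b_{T,j}$ coming from an inverse estimate on the Lagrange multiplier $p_{T,j}$ together with Remark~\ref{r:submesh} and elliptic regularity on a ball. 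Only then can the annealed large-scale $L^p$ theory on boxes (Lemma~\ref{l:RegularityAnnealed}, with $p=4$, $q=9/2$) be applied. Without identifying the constraint-induced PDE structure your plan has no handle on the regularity of $\nabla\mathbf{q}_{T,k}$, and the Meyers/$C^{0,1}$ references you cite (for equations without such constraints) are not directly usable; so as written the argument would stall precisely at the step you label your main obstacle.
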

\noindent
Section~\ref{ss:proof} below is devoted to the proof of this theorem. The combination of 
Proposition~\ref{p:errorestimateQlocal}
and Theorem~\ref{t:gammaestimate} readily yields the desired a priori error bound of the numerical stochastic homogenization method of Section~\ref{s:notation}. 

\begin{corollary}[a~priori error estimate for the numerical method]
       \label{c:errorestimate}
Let the diffusion tensor $\mathbf A$ satisfy assumptions
(A1)--(A3) from Subsection~\ref{ss:keyassumptions}, let $\ell\approx \lvert\log H\rvert$, 
and let $\mathbf{u}$ solve \eqref{e:diff1drandweak2} and let
$u_H$ solve \eqref{e:uHdef} with right-hand side
$f\in L^2(D)$.
Then,
\begin{equation*}
\begin{aligned}
\sqrt{\mathbb{E}[ \| \mathbf{u}-u_H \|_{L^2(D)}^2] }
\lesssim 
\lvert\log H\rvert^{4+3d/2} 
 \left(H+\left(\frac{\varepsilon}{H}\right)^{d/2} \right)
  \|f\|_{L^2(D)}.
\end{aligned}
\end{equation*}
\end{corollary}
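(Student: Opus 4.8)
The plan is to simply chain together the two results already established: the (essentially assumption-free, requiring only (A1)) a~posteriori bound of Proposition~\ref{p:errorestimateQlocal} and the a~priori control of the model error estimator $\gamma$ furnished by Theorem~\ref{t:gammaestimate}. First I would invoke Proposition~\ref{p:errorestimateQlocal}, which under the standing choice $\ell\approx\lvert\log H\rvert$ yields
\[
\sqrt{\mathbb{E}[\|\mathbf{u}-u_H\|_{L^2(D)}^2]} \lesssim (H + \ell^d\gamma)\,\|f\|_{L^2(D)}.
\]
Since Assumptions (A1)--(A3) are in force, Theorem~\ref{t:gammaestimate} applies and gives $\gamma \lesssim \lvert\log H\rvert^{4+d/2}\bigl(H + (\varepsilon/H)^{d/2}\bigr)$.

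Next I would substitute this bound into the previous display. Because the oversampling parameter satisfies $\ell\approx\lvert\log H\rvert$, the combinatorial factor obeys $\ell^d\approx\lvert\log H\rvert^d$, so that
\[
\ell^d\gamma \lesssim \lvert\log H\rvert^{d}\,\lvert\log H\rvert^{4+d/2}\left(H + \left(\frac{\varepsilon}{H}\right)^{d/2}\right) = \lvert\log H\rvert^{4+3d/2}\left(H + \left(\frac{\varepsilon}{H}\right)^{d/2}\right),
\]
the exponent arithmetic being $d + (4+d/2) = 4+3d/2$. Finally I would absorb the remaining stand-alone term $H$ coming from Proposition~\ref{p:errorestimateQlocal} into this bound: for $H$ small enough (the only regime of interest) one has $H \le \lvert\log H\rvert^{4+3d/2}H \le \lvert\log H\rvert^{4+3d/2}\bigl(H+(\varepsilon/H)^{d/2}\bigr)$, so the lone $H$ contribution is dominated and may be dropped at the cost of the implicit constant. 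Collecting terms and multiplying by $\|f\|_{L^2(D)}$ then gives exactly the asserted estimate.

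There is essentially no obstacle here: the corollary is a bookkeeping combination of the two preceding statements, and the only point requiring a moment's care is tracking the power of the logarithm through the multiplication by $\ell^d$ and confirming that the lone $H$ term is subsumed by the logarithmically weighted right-hand side. All the genuine analytical work — the LOD localization and worst-case best-approximation estimates behind Proposition~\ref{p:errorestimateQlocal}, together with the spectral-gap and corrector estimates behind Theorem~\ref{t:gammaestimate} — has already been carried out, so the present proof is a short two-line assembly.
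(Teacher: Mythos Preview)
Your proposal is correct and matches the paper's approach exactly: the paper simply states that the combination of Proposition~\ref{p:errorestimateQlocal} and Theorem~\ref{t:gammaestimate} ``readily yields'' the corollary, and you have spelled out precisely this combination, including the exponent bookkeeping $d+(4+d/2)=4+3d/2$ and the absorption of the lone $H$ term.
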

\noindent
Note that the logarithmic factor $|\log H|^{4+3d/2}$ in the preceding estimate is likely non-optimal; for instance, deriving and using sharper bounds on $|\nabla \mathbf{q}_{T,k}|$ that reflect the exponential decay of $\mathbf{q}_{T,k}$ outside of $T$ in the proof below would give rise to an improved estimate.

\subsection{Proof of the main result}\label{ss:proof}
This section is devoted to the proof of Theorem~
\ref{t:gammaestimate}.
Subsections~\ref{ss:modelerror} and 
\ref{ss:schur} provide the necessary variance bounds
for the entries of $\boldsymbol{\mathcal A}_H$.
The final Subsection~\ref{sss:proofMainThm} concludes
the proof of
Theorem~\ref{t:gammaestimate}.

\subsubsection{Variance bounds for the entries of 
                \eqref{e:AHdef}}\label{ss:modelerror}

\begin{theorem}\label{t:varestimate}
Let $\mathbf{A}$ be a random coefficient field subject to the assumptions (A1)--(A3). Then the entries of the upscaled operator $\boldsymbol{\mathcal A}_H$ defined in \eqref{e:AHdef} satisfy the variance estimate
\begin{align*}
\mathbb{E}\Big[\Big(\boldsymbol{\mathcal A}_H|_{T,K}-\bar{\mathcal{A}}_H|_{T,K}\Big)^2 \Big]
\lesssim
\frac{H^2}{|T|^2}+
 \frac{\ell^8}{|T|^2} \bigg(\frac{\varepsilon}{H}\bigg)^{d}.
\end{align*}
\end{theorem}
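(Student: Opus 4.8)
The goal is to bound the variance of each entry $(\boldsymbol{\mathcal A}_H|_{T,K})_{jk}$, and the natural tool is the spectral gap inequality (A3). First I would reduce to a cleaner geometric situation via Lemma~\ref{l:perturb}: replacing the patch $D_T$ by the box $Q$ changes the entry by at most $H/|T|$ almost surely, so its variance changes by at most $O(H^2/|T|^2)$, which is absorbed into the first term of the claimed bound. Hence it suffices to estimate $\mathbb{E}[((\boldsymbol{\mathcal A}^Q_H|_{T,K})_{jk}-\mathbb{E}[(\boldsymbol{\mathcal A}^Q_H|_{T,K})_{jk}])^2]$, i.e. we may assume the corrector lives on a box. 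Writing $F=(\boldsymbol{\mathcal A}^Q_H|_{T,K})_{jk}$, which from \eqref{e:AHdef} has the form $\frac{1}{|T||K|}(\delta_{T,K}\int_T \mathbf{A}_{jk}-e_j\cdot\int_K \mathbf{A}\nabla\mathbf{q}^Q_{T,k})$, I would apply \eqref{SpectralGap} and therefore need a pointwise bound on the Fréchet derivative $\partial F/\partial\mathbf{A}(\tilde x)$, and then control its $\varepsilon$-local average in $L^2(\mathrm dx)$.

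The computation of $\partial F/\partial\mathbf{A}(\tilde x)$ is the standard one for correctors: perturbing $\mathbf{A}$ at $\tilde x$ perturbs $\mathbf{q}^Q_{T,k}$, and differentiating the cell problem \eqref{e:qTelementcorrELL} shows that $\partial\mathbf{q}^Q_{T,k}/\partial\mathbf{A}(\tilde x)$ is governed by a dual cell problem with right-hand side localized at $\tilde x$. This produces the familiar structure: $\lvert\partial F/\partial\mathbf{A}(\tilde x)\rvert$ is controlled (up to the $\frac{1}{|T||K|}$ prefactor) by a product of two gradient-type quantities at $\tilde x$ — one involving $\nabla\mathbf{q}^Q_{T,k}$ (the direct corrector) and one involving the gradient of the solution $\mathbf{z}$ of the adjoint problem with right-hand side supported on $K$ (i.e. $-\ddiv(\mathbf{A}^*\nabla\mathbf{z})=\ddiv(\mathbf{1}_K\,e_j)$ on $Q$ with homogeneous boundary data), plus an indicator-of-$T$ contribution from the first term when $T=K$. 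So schematically
\begin{equation*}
\bigg|\frac{\partial F}{\partial\mathbf{A}}(\tilde x)\bigg|
\lesssim
\frac{1}{|T|\,|K|}\Big(\mathbf{1}_T(\tilde x)+|\nabla\mathbf{q}^Q_{T,k}(\tilde x)|\Big)\,
\Big(\mathbf{1}_K(\tilde x)+|\nabla\mathbf{z}(\tilde x)|\Big).
\end{equation*}
Inserting this into \eqref{SpectralGap}, the variance is bounded by $\frac{\varepsilon^d}{\rho|T|^2|K|^2}\int_{\Rd}\big|\fint_{B_\varepsilon(x)}(\mathbf{1}_T+|\nabla\mathbf{q}^Q_{T,k}|)(\mathbf{1}_K+|\nabla\mathbf{z}|)\big|^2\,dx$, and by Cauchy–Schwarz on $B_\varepsilon(x)$ and then Fubini the spatial integral is controlled by (a local maximal-function version of) $\int_{\Rd}(\mathbf{1}_T+|\nabla\mathbf{q}^Q_{T,k}|)^2(\mathbf{1}_K+|\nabla\mathbf{z}|)^2\,dx$ up to an $\varepsilon$-mollification.

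The crux — and the main obstacle — is then to control this last integral: one needs the product of the corrector gradient and the adjoint-solution gradient to be small in $L^2$. Here I would bring in quantitative stochastic-homogenization regularity, namely (small-scale / large-scale) Meyers-type and Lipschitz-type estimates: $\nabla\mathbf{q}^Q_{T,k}$ satisfies a higher integrability bound and, away from $T$, decays; similarly $\nabla\mathbf{z}$ is concentrated near $K$ and, being the gradient of a solution with source in the small set $K$, is controlled in $L^p$ for some $p>2$ with a bound carrying a factor $|K|^{1/2}$ and negative powers of the distance from $K$. Combining these via Hölder, summing the contributions over the $O(\ell^d)$ elements that can carry mass in $\nei^\ell(T)$, and keeping track of the $|T|,|K|$ normalizations, the $\varepsilon^d$ prefactor turns the whole thing into $\frac{\ell^{\,8}}{|T|^2}(\varepsilon/H)^d$; the power $\ell^8$ comes from crudely bounding $\|\nabla\mathbf{q}^Q_{T,k}\|$ and $\|\nabla\mathbf{z}\|$ on the full $\ell$-patch without exploiting their exponential decay (as the remark after Corollary~\ref{c:errorestimate} notes, sharper decay-aware bounds would reduce this power). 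I expect the bookkeeping of these regularity estimates — ensuring the deterministic constants depend only on $\lambda,\Lambda,d$ and the shape regularity, and that the $\varepsilon$-scale mollification in \eqref{SpectralGap} interacts correctly with the Meyers exponent — to be the delicate part; the algebraic combination at the end is routine.
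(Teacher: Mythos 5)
Your overall architecture matches the paper's: apply Lemma~\ref{l:perturb} to move to a box, compute the Fr\'echet derivative of the entry via an adjoint cell problem, feed it into the spectral gap inequality, then use Cauchy--Schwarz/H\"older to reduce to spatial $L^p$ bounds on the gradients of the corrector and the adjoint solution. This is indeed the paper's route, and your schematic bound on $\lvert\partial F/\partial\mathbf{A}(\tilde x)\rvert$ corresponds to the four-term representation the paper derives with $\boldsymbol{\xi}_{T,K,j}$ in place of your $\mathbf{z}$.

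However, there is a genuine gap at the step where you ``bring in quantitative stochastic-homogenization regularity'' to control $\nabla\mathbf{q}^Q_{T,k}$ and $\nabla\mathbf{z}$. You treat $\mathbf{q}^Q_{T,k}$ and $\mathbf{z}$ as solutions of \emph{unconstrained} divergence-form PDEs (``$-\ddiv(\mathbf{A}^*\nabla\mathbf{z})=\ddiv(\mathbf{1}_K e_j)$ on $Q$ with homogeneous boundary data''), but by \eqref{e:qTelementcorrELL} the test and trial space is $W_{D_T}=\ker I_H\cap H^1_0(D_T)$, not $H^1_0(D_T)$. The Euler--Lagrange equation therefore carries a Lagrange multiplier enforcing $I_H\mathbf{q}=0$, and the annealed regularity theory you invoke does not apply to this constrained problem off the shelf. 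The paper's key technical device, which is absent from your proposal, is the Schur-complement Lemma~\ref{l:schurcomplement}: it rewrites the corrector equation as $-\ddiv(\mathbf{A}\nabla\mathbf{q}_{T,j})=-\ddiv(\mathbf{A}e_j\chi_T+b_{T,j})$ with $\int_{D_T}|b_{T,j}|^{9/2}\,dx\lesssim\ell^8|T|$, by bounding the Lagrange multiplier through the coercivity of the Schur complement, inverse estimates on a barycentric submesh (Remark~\ref{r:submesh}), and lifting the multiplier to a divergence via an auxiliary Poisson problem. Only after this conversion is the annealed large-scale $L^p$ estimate (Lemma~\ref{l:RegularityAnnealed}) applicable, and the same treatment is needed for the adjoint problem defining $\boldsymbol{\xi}_{T,K,j}$.

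A smaller but relevant point: ``Meyers-type'' higher integrability would give only $p=2+\delta$ for a small $\delta$ depending on the contrast, whereas the variance bound requires controlling the \emph{fourth} moment $\int\mathbb{E}[(\fint_{B_\varepsilon}|\nabla\mathbf{q}_T|^2)^2]\,dx$, i.e.\ $p=4$; this needs the full annealed Calder\'on--Zygmund theory of Lemma~\ref{l:RegularityAnnealed} (and for boxes, the boundary/edge/corner regularity theory cited there), not a perturbative Meyers argument. Also, the decay of $\mathbf{q}_{T,k}$ away from $T$ is not actually exploited in the paper's proof of this theorem; the $\ell^8$ arises from the Poincar\'e and elliptic-regularity constants on the $\ell$-patch in Lemma~\ref{l:schurcomplement}, as the remark after Corollary~\ref{c:errorestimate} concedes.
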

\begin{proof}
We apply Lemma~\ref{l:perturb} and assume that $D_T$
is a box and note that the error from this replacement is controlled
by $H$.
Our goal is to estimate the variance of $\boldsymbol{\mathcal A}_H|_{T,K}$ by means of the spectral gap inequality \eqref{SpectralGap}. To do so, we need to bound the Fr\'echet derivative of $\boldsymbol{\mathcal A}_H|_{T,K}$. By \eqref{e:AHdef} we have
\begin{align*}
\frac{\partial (\boldsymbol{\mathcal A}_H|_{T,K})_{jk}}{\partial \mathbf{A}} (\mathbf{\delta A})
=\frac{1}{|T|\,|K|}
\Bigg(&\delta_{T,K}\int_T (\mathbf{\delta A})_{jk}\,dx -
 e_j\cdot\int_K \mathbf{\delta A}\nabla \mathbf{q}_{T,k}\,dx 
\\&
-e_j\cdot\int_K \mathbf{A}\nabla \frac{\partial \mathbf{q}_{T,k}}{\partial \mathbf{A}}(\mathbf{\delta A})\,dx 
\Bigg).
\end{align*}
We define the auxiliary functions $\boldsymbol{\xi}_{T,K,j}\in W_{D_T}$ as the unique solution to the equation
\begin{equation}\label{e:auxpde}
\int_{D_T} \nabla w\cdot(\mathbf{A}^* \nabla \boldsymbol{\xi}_{T,K})\,dx
=
\int_{K} \nabla w \cdot(\mathbf{A}^* e_j)\,dx
\quad\text{for all }w\in W_{D_T}.
\end{equation}
Choosing $w=\frac{\partial \mathbf{q}_{T,k}}{\partial \mathbf{A}}(\mathbf{\delta A})$ as a test function, we may rewrite the Fr\'echet derivative of $\boldsymbol{\mathcal A}_H|_{T,K}$ as
\begin{multline}\label{e:char1}
\frac{\partial (\boldsymbol{\mathcal A}_H|_{T,K})_{jk}}{\partial \mathbf{A}} (\mathbf{\delta A})
=\frac{1}{|T|\,|K|}
\Bigg(\delta_{T,K}\int_T (\mathbf{\delta A})_{jk}\,dx -
 e_j\cdot\int_K \mathbf{\delta A}\nabla \mathbf{q}_{T,k}\,dx 
\\
-\int_{D_T} \nabla \boldsymbol{\xi}_{T,K,j} \cdot \mathbf{A}\nabla \frac{\partial \mathbf{q}_{T,k}}{\partial \mathbf{A}}(\mathbf{\delta A})\,dx 
\Bigg).
\end{multline}
The differentiation of \eqref{e:qTelementcorrELL} shows that
\begin{equation*}
\int_{D_T} \nabla w\cdot\bigg(\mathbf{A} \nabla \frac{\partial \mathbf{q}_{T,k}}{\partial \mathbf{A}}(\mathbf{\delta A})\bigg)\,dx
=
\int_{T} \nabla w \cdot(\delta\mathbf{A} e_k)\,dx
-\int_{D_T} \nabla w\cdot(\delta\mathbf{A} \nabla \mathbf{q}_{T,k})\,dx
\end{equation*}
for any $w\in W_{D_T}$. The particular choice $w=\boldsymbol{\xi}_{T,K,j}$ allows one to rewrite \eqref{e:char1} in the form
\begin{align*}
\frac{\partial (\boldsymbol{\mathcal A}_H|_{T,K})_{jk}}{\partial \mathbf{A}} (\mathbf{\delta A})
=
&
\frac{1}{|T|\,|K|}
\Bigg(\delta_{T,K}\int_T (\mathbf{\delta A})_{jk}\,dx -
 e_j\cdot\int_K \mathbf{\delta A}\nabla \mathbf{q}_{T,k}\,dx 
\\
&
\quad
-\int_{T} \nabla \boldsymbol{\xi}_{T,K,j} \cdot(\delta\mathbf{A} e_k)\,dx
+\int_{D_T} \nabla \boldsymbol{\xi}_{T,K,j} \cdot(\delta\mathbf{A} \nabla \mathbf{q}_{T,k})\,dx
\Bigg).
\end{align*}
This expression characterizes the 
($L^2$ representation of the)
Fr\'echet derivative of the entries of $\boldsymbol{\mathcal A}_H|_{T,K}$ with respect to 
the coefficient field $\mathbf{A}$ as
\begin{align*}
\frac{\partial (\boldsymbol{\mathcal A}_H|_{T,K})_{jk}}{\partial \mathbf{A}}
=\frac{1}{|T|\,|K|}
\Big(&\delta_{T,K} e_j\otimes e_k  \chi_T
- e_j\otimes \nabla \mathbf{q}_{T,k} \chi_K 
\\&
-\nabla \boldsymbol{\xi}_{T,K,j} \otimes e_k \chi_T
+\nabla \boldsymbol{\xi}_{T,K,j} \otimes \nabla \mathbf{q}_{T,k} \chi_{D_T}
\Big).
\end{align*}
Using the estimate \eqref{SpectralGap} of Assumption (A3), this readily yields
\begin{align*}
&\mathbb{E}\Big[\Big(\boldsymbol{\mathcal A}_H|_{T,K}-\bar{\mathcal{A}}_H|_{T,K}\Big)^2 \Big]
\\&
\lesssim
\frac{\varepsilon^d}{|T|^2 |K|^2}
\mathbb{E}\Bigg[
\int_\Rd \bigg|\fint_{B_\varepsilon(x)} \chi_T (\delta_{T,K}+|\nabla \boldsymbol{\xi}_{T,K}|) \,d\tilde x\bigg|^2 \,dx
\Bigg]
\\&~~~
+\frac{\varepsilon^d}{|T|^2 |K|^2}
\mathbb{E}\Bigg[\int_\Rd \bigg|\fint_{B_\varepsilon(x)} (\chi_K+\chi_{D_T} |\nabla \boldsymbol{\xi}_{T,K}|) |\nabla \mathbf{q}_{T}| \,d\tilde x\bigg|^2 \,dx
\Bigg].
\end{align*}
Jensen's inequality and H\"older's inequality then imply
\begin{align*}
&\mathbb{E}\Big[\Big(\boldsymbol{\mathcal A}_H|_{T,K}-\bar{\mathcal{A}}_H|_{T,K}\Big)^2 \Big]
\\&
\lesssim
\frac{\varepsilon^d}{|T|^2 |K|^2} \Bigg(|T|\delta_{T,K}
+ \mathbb{E}\Bigg[\int_{T} |\nabla \boldsymbol{\xi}_{T,K}|^2 \,dx \Bigg] + \mathbb{E}\Bigg[\int_{K} |\nabla \mathbf{q}_{T}|^2 \,dx\Bigg] \Bigg)
\\&~~~
+\frac{\varepsilon^d}{|T|^2 |K|^2}
\int_{D_T}
\mathbb{E}\Bigg[\bigg(\fint_{B_\varepsilon(x)} \chi_{D_T} |\nabla \boldsymbol{\xi}_{T,K}|^2 \,d\tilde x\bigg)^2 \Bigg]^{1/2}
\\&\qquad\qquad \qquad\qquad\quad \times
\mathbb{E}\Bigg[\bigg(\fint_{B_\varepsilon(x)} \chi_{D_T} |\nabla \mathbf{q}_{T}|^2 \,d\tilde x\bigg)^2 \Bigg]^{1/2} \,dx.
\end{align*}
At this point we make use of the assumption that $D_T$ is a box.
We first note that Lemma~\ref{l:schurcomplement} 
is still valid for the box $D_T$ (which need not necessarily match
with the triangulation $\T_H$)
because, after considering a larger patch
$\widehat{D}_T$ containing the box domain $D_T$ and 
applying Lemma~\ref{l:schurcomplement} there,
the restriction of resulting right-hand side $\hat b_{T,j}$
satisfies the properties from Lemma~\ref{l:schurcomplement}
for the box domain.
Using Lemma~\ref{l:schurcomplement} (which applies to 
$\boldsymbol{\xi}_{T,K}$, as its defining equation \eqref{e:auxpde} 
is of the same structure 
as \eqref{e:qTelementcorrELL}) and Lemma~\ref{l:RegularityAnnealed}
(which applies to the box $D_T$) below,
we deduce the estimates
\begin{align*}
\int_{D_T} \mathbb{E}\Bigg[\bigg(\fint_{B_\varepsilon(x)} |\nabla \mathbf{q}_{T}|^2 \,d\tilde x\bigg)^2 \Bigg] \,dx
&\leq C(\lambda,\Lambda,\rho) \ell^8 |T|,
\\
\int_{D_T} \mathbb{E}\Bigg[\bigg(\fint_{B_\varepsilon(x)} |\nabla \boldsymbol{\xi}_{T,K}|^2 \,d\tilde x\bigg)^2 \Bigg] \,dx
&\leq C(\lambda,\Lambda,\rho) \ell^8 |K|.
\end{align*}
Furthermore, a simple energy estimate yields
\begin{align*}
\mathbb{E}\bigg[\int_{D_T} |\nabla \mathbf{q}_{T}|^2  \,dx \bigg]
&\leq C(\lambda,\Lambda) |T|,
\\
\mathbb{E}\bigg[\int_{D_T} |\nabla \boldsymbol{\xi}_{T,K}|^2 \,dx \bigg] 
&\leq C(\lambda,\Lambda) |K|.
\end{align*}
Inserting these bounds as well as the relations $H^d \lesssim |T| \lesssim |K|\lesssim H^d$ in the previous estimate, we obtain the desired bound
\begin{align*}
&\mathbb{E}\Big[\Big(\boldsymbol{\mathcal A}_H|_{T,K}-\bar{\mathcal{A}}_H|_{T,K}\Big)^2 \Big]
\lesssim \frac{\varepsilon^d}{H^d |T|^2} \ell^8.
\end{align*}
\end{proof}

The following result is derived in the case of an equation on the full space $\mathbb{R}^d$ in \cite{DuerinckxOtto2019}, extending earlier results from \cite{ArmstrongDaniel,DuerinckxGloriaOtto2016}. Its proof in the case of the Dirichlet problem on a box is analogous, but requires a boundary regularity theory as derived in \cite{FischerRaithel}, as well as a regularity theory at edges and corners as an input;
we refer to the forthcoming work \cite{BellaFischerJosienRaithel}.
\begin{lemma}[Annealed large-scale $L^p$ theory for random elliptic operators on cubes]
\label{l:RegularityAnnealed}
Let $d\in \{2,3\}$ and let $\mathbf{A}$ be a random coefficient field subject to the assumptions (A1)--(A3). Let $Q\subset \mathbb{R}^d$ be a box, let $b\in L^2(Q)$, and let $\boldsymbol{u}\in L^2(\Omega;H^1_0(Q))$ be a solution to the linear elliptic PDE
\begin{align*}
-\nabla \cdot (\mathbf{A}\nabla \boldsymbol{u})&=\nabla \cdot b &&\text{on }Q,
\\
\boldsymbol{u}&\equiv 0 &&\text{on }\partial Q.
\end{align*}
Then for any $2\leq p<\infty$ and any $p<q<\infty$
a regularity estimate of the form
\begin{align*}
\fint_Q \mathbb{E}\Bigg[\bigg(\fint_{B_\varepsilon(x)} \chi_{Q} |\nabla \boldsymbol{u}|^2 \,d\tilde x\bigg)^{p/2} \Bigg] \,dx
\leq C(\lambda,\Lambda,\rho,p,q) \bigg(\fint_Q |b|^q \,dx\bigg)^{p/q}
\end{align*}
holds true.
\end{lemma}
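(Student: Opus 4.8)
The plan is to follow the established route to annealed Calder\'on--Zygmund estimates in quantitative stochastic homogenization: deduce the bound from a \emph{large-scale regularity theory} for $-\nabla\cdot\mathbf A\nabla$ on $Q$ that is uniform up to the faces, edges and corners of the box, combine it with a Calder\'on--Zygmund (Meyers/Gehring-type) argument, and pass to stochastic moments only at the very end. The central object is a \emph{minimal radius} $r_*\colon\Omega\times Q\to[\varepsilon,\infty)$ with two properties. First, large-scale Lipschitz regularity: for every $x\in Q$ and every $r\ge r_*(x)$, any $v$ with $-\nabla\cdot\mathbf A\nabla v=0$ in $B_r(x)\cap Q$ and $v=0$ on $B_r(x)\cap\partial Q$ satisfies $\fint_{B_\rho(x)\cap Q}|\nabla v|^2\lesssim\fint_{B_r(x)\cap Q}|\nabla v|^2$ for all $r_*(x)\le\rho\le r$. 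Second, stretched-exponential stochastic moments of $r_*/\varepsilon$, uniformly in $x\in Q$ and in the box $Q$: there are $c,\alpha>0$ depending only on $d,\lambda,\Lambda,\rho$ with $\mathbb E\big[\exp\big(c\,(r_*(x)/\varepsilon)^\alpha\big)\big]\lesssim1$; in particular every algebraic moment of $r_*/\varepsilon$ is bounded uniformly.

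The construction of $r_*$ is where the cited (partly forthcoming) regularity theory enters; throughout one works after the rescaling $x\mapsto x/\varepsilon$, which turns (A3) into the unit-scale spectral gap under which those works are stated. In the interior of $Q$, such an $r_*$ together with its moment bounds is the output of \cite{ArmstrongDaniel,DuerinckxGloriaOtto2016,GloriaNeukammOttoPreprint,DuerinckxOtto2019}, built from the sublinearity of the homogenization correctors $(\phi,\sigma)$, which is controlled via the spectral gap inequality. Near a flat face of $\partial Q$ carrying the homogeneous Dirichlet condition, one replaces the whole-space correctors by the half-space boundary corrector and boundary flux corrector of \cite{FischerRaithel}, which gives the same excess decay in half-balls centred on that face. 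The behaviour near the lower-dimensional skeleton of $\partial Q$ --- edges and corners, since $Q$ is a box --- is governed by the Liouville principles and large-scale regularity for wedge- and octant-type domains established in the forthcoming \cite{BellaFischerJosienRaithel}. Because $Q$ is a box, a finite covering of $\overline Q$ by interior balls, boundary half-balls, edge neighbourhoods and corner neighbourhoods --- each locally isometric to one of finitely many model configurations --- lets one paste these local statements into a single field $r_*$ on $Q$ with the two properties above. I expect this last step --- the edge/corner input of \cite{BellaFischerJosienRaithel} and its coupling with the interior and flat-boundary theory into one minimal radius with uniform moment bounds --- to be the genuine obstacle; everything else is routine.

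Granted $r_*$, the remaining argument is essentially deterministic. From the large-scale Lipschitz property and the Caccioppoli inequality for $-\nabla\cdot\mathbf A\nabla\boldsymbol u=\nabla\cdot b$ one obtains, for each realization, an improvement-of-integrability (weak reverse-H\"older) statement on scales above $r_*$ for the $r_*$-coarsened gradient $G(x):=\big(\fint_{B_{r_*(x)}(x)\cap Q}|\nabla\boldsymbol u|^2\big)^{1/2}$; the Calder\'on--Zygmund machinery of \cite{ArmstrongDaniel,DuerinckxOtto2019}, in the weighted form that accounts for the spatial variation of $r_*$, then upgrades the energy estimate into a bound for $\big(\fint_Q G^p\big)^{1/p}$ in terms of $\big(\fint_Q|b|^q\big)^{1/q}$ and an $L^\sigma(Q)$-norm of a fixed power of $r_*/\varepsilon$, valid for any $2\le p<q<\infty$ with $1/\sigma=1/p-1/q$. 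Since $r_*\ge\varepsilon$ pointwise, $\fint_{B_\varepsilon(x)}\chi_Q|\nabla\boldsymbol u|^2\lesssim(r_*(x)/\varepsilon)^d\,G(x)^2$; raising this to the power $p/2$, taking the expectation and the average over $x\in Q$, and using H\"older's inequality in $\omega$ to absorb the factor $(r_*/\varepsilon)^{dp/2}$ together with the surplus spatial integrability (the gap between $q$ and $p$) into the stochastic moments of $r_*/\varepsilon$ supplied by the second property, one arrives at the asserted estimate, with $C=C(\lambda,\Lambda,\rho,p,q)$ independent of $\varepsilon$ and of the box $Q$ (which enters only through the normalisations $\fint_Q$). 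This is exactly where the hypothesis $q>p$ rather than $q\ge p$ is used. The case $d=2$ requires no change beyond working with the (possibly logarithmically unbounded) correctors on dyadic annuli, which does not affect $r_*$; cf.\ \cite{GloriaNeukammOttoPreprint}.
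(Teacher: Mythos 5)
The paper does not actually prove this lemma: it records that the whole-space version is in \cite{DuerinckxOtto2019} (building on \cite{ArmstrongDaniel,DuerinckxGloriaOtto2016}), that the flat-boundary theory is in \cite{FischerRaithel}, and that the edge/corner regularity needed to handle a box is deferred to the forthcoming \cite{BellaFischerJosienRaithel}. Your sketch reconstructs exactly that route --- rescale by $\varepsilon$, build a minimal radius $r_*$ with large-scale Lipschitz regularity and stretched-exponential moments by pasting interior, half-space, and wedge/octant statements, then run the weighted Calder\'on--Zygmund argument and trade the $p<q$ gap against moments of $r_*/\varepsilon$ --- and you correctly flag the edge/corner input as the one genuinely non-routine (and currently unpublished) ingredient, so your proposal matches the paper's approach.
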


\subsubsection{Schur complement representation of the element correctors}\label{ss:schur}

\begin{lemma}
\label{l:schurcomplement}
The element correctors $\mathbf{q}_{T,j}$ satisfy a PDE of the form
\begin{align*}
\nabla \cdot (\mathbf{A} \nabla \mathbf{q}_{T,j})=\nabla \cdot (\mathbf{A}  e_j\chi_T + b_{T,j}) \qquad\text{on }D_T
\end{align*}
for some $b_{T,j}$ with
\begin{align*}
\int_{D_T} |b_{T,j}|^{9/2} \,dx \lesssim \ell^{8} |T|.
\end{align*}
\end{lemma}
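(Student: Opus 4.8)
The plan is to exhibit $b_{T,j}$ explicitly as the effect of eliminating the "interior" degrees of freedom of the corrector problem, and then to control it by the deterministic exponential-decay machinery of \cite{MalqvistPeterseim2014,HenningPeterseim2013} together with an annealed Meyers-type higher integrability estimate. First I would recall that $\mathbf{q}_{T,j}\in W_{D_T}$ solves $\int_{D_T}\nabla w\cdot(\mathbf{A}\nabla\mathbf{q}_{T,j})\,dx=\int_T\nabla w\cdot(\mathbf{A}e_j)\,dx$ for all $w\in W_{D_T}$, i.e.\ it is the $\mathbf A$-orthogonal projection onto the fine-scale space $W_{D_T}$ of (minus) the function whose flux is $\mathbf{A}e_j\chi_T$. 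The constraint $w\in W=\ker I_H$ is exactly a finite number of linear side conditions per element (the nodal averages of the quasi-interpolant vanish), so passing from the constrained problem on $W_{D_T}$ to an unconstrained problem on $H^1_0(D_T)$ introduces a correction supported on the finite-element patch that can be written in flux form. Concretely, I would introduce the Lagrange-multiplier / Schur-complement reformulation: there is a finite-element function $\mu_H$ (living in a space dual to the constraints, essentially a piecewise affine function over $\T_H$ restricted to $D_T$) such that $\mathbf{q}_{T,j}\in H^1_0(D_T)$ solves $-\nabla\cdot(\mathbf{A}\nabla\mathbf{q}_{T,j})=-\nabla\cdot(\mathbf{A}e_j\chi_T)+(\text{terms involving }\mu_H)$, and the right-hand side can be massaged into the form $\nabla\cdot(\mathbf{A}e_j\chi_T+b_{T,j})$ with $b_{T,j}$ a piecewise polynomial of low degree, supported on $D_T$, whose size is governed by $\|\nabla\mathbf{q}_{T,j}\|$ and $\mu_H$ via the inf–sup stability of the constraint. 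The key point is that $b_{T,j}$ is a \emph{piecewise polynomial of fixed degree} on the (quasi-uniform) mesh restricted to $D_T$, so an inverse estimate converts any $L^2$-type bound on $D_T$ into an $L^{9/2}$ bound at the cost of powers of $H$ which cancel against $|D_T|^{1/2-2/9}$.

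With $b_{T,j}$ identified, the estimate $\int_{D_T}|b_{T,j}|^{9/2}\,dx\lesssim\ell^8|T|$ would be obtained in two stages. Stage one: an $L^2$ bound. Here the plan is to use the well-known exponential decay of the truncated corrector (Lemma-type results from \cite{MalqvistPeterseim2014,HenningPeterseim2013}): $\|\nabla\mathbf{q}_{T,j}\|_{L^2(D_T\setminus\nei^{m}(T))}$ decays like $e^{-cm}\|\nabla\mathbf{q}_{T,j}\|_{L^2(D_T)}$, and $\|\nabla\mathbf{q}_{T,j}\|_{L^2(D_T)}\lesssim|T|^{1/2}$ by the energy estimate (Cauchy–Schwarz in \eqref{e:qTelementcorrELL}). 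Since $b_{T,j}$ is built from $\mathbf A\nabla\mathbf{q}_{T,j}$ and the multiplier, which in turn is controlled by the corrector energy on each annulus, one gets $\int_{D_T}|b_{T,j}|^2\,dx\lesssim|T|$, with possibly a polynomial-in-$\ell$ loss from summing over the $\mathcal O(\ell)$ annuli of the patch (each annulus contributing a bounded number times a geometrically small factor, so in fact even this sum is $\mathcal O(1)$; to be safe I keep a factor $\ell^{2}$ or so). Stage two: upgrade from $L^2$ to $L^{9/2}$. This is where one cannot use the piecewise-polynomial inverse estimate alone, because that would spend a negative power of $H$ that does not cancel; instead the correct tool is a higher-integrability (Meyers / Gehring) estimate for the corrector gradient. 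Applying an annealed large-scale $L^p$ regularity result — the deterministic Meyers estimate on each scale, or more robustly the annealed version in the spirit of Lemma~\ref{l:RegularityAnnealed} — one controls $\fint_{D_T}|\nabla\mathbf{q}_{T,j}|^{p}$ for some $p>2$ (hence in particular $p=9/2$, after choosing the exponents generously) by $\fint_{D_T}|\mathbf A e_j\chi_T|^{q}$ for $q$ slightly larger, which is $\sim(|T|/|D_T|)^{?}$; tracking the $H$- and $\ell$-powers ($|D_T|\approx\ell^dH^d$, $|T|\approx H^d$) and absorbing all mesh powers, this delivers $\int_{D_T}|b_{T,j}|^{9/2}\,dx\lesssim\ell^{8}|T|$, the exponent $8$ being the (non-sharp, as the paper itself remarks) bookkeeping of decay-annulus sums and the gap between $9/2$ and the Meyers exponent.

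The main obstacle I anticipate is making the "Schur complement / flux representation" step genuinely rigorous: one must verify that the constrained minimiser over $W_{D_T}=\ker I_H\cap H^1_0(D_T)$ really does satisfy an unconstrained PDE with a \emph{divergence-form} right-hand side whose flux correction $b_{T,j}$ is (a) supported in $D_T$, (b) a piecewise polynomial of controlled degree, and (c) bounded, locally on each element, in terms of the local corrector energy. This rests on the specific structure of the chosen $I_H$ (the $L^2$-projection onto discontinuous $P_1$ followed by vertex averaging, cf.\ Remark~\ref{r:submesh}): the constraints are finitely many per element and have local, mesh-dependent Riesz representatives, and the inf–sup constant of the constraint bilinear form is mesh-independent by quasi-uniformity; assembling these facts to write $b_{T,j}$ and to localise its norm is the technical heart. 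A secondary difficulty is the annealed higher-integrability input — on a box with Dirichlet (and corner/edge) data this is not classical and is exactly the content the paper defers to \cite{FischerRaithel,BellaFischerJosienRaithel}; I would invoke it as a black box in the same way Lemma~\ref{l:RegularityAnnealed} is used, noting that for \emph{this} lemma even the purely deterministic Meyers estimate (uniform in $\omega$, since the ellipticity ratio is uniformly bounded by (A1)) suffices to get \emph{some} $p>2$, and one then enlarges the patch-sum exponent to reach $9/2$ while keeping the $\ell$-power finite, which is all that is needed downstream.
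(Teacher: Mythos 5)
Your proposal gets the opening move right (the Schur-complement / Lagrange-multiplier reformulation of the constrained corrector problem), but the construction and estimation of $b_{T,j}$ diverge substantially from the paper, and the divergence is not merely cosmetic: there is a gap at exactly the point you flagged as the "technical heart."

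The paper does \emph{not} take $b_{T,j}$ to be a piecewise polynomial built from $\mathbf{A}\nabla\mathbf{q}_{T,j}$ and the multiplier, and it does \emph{not} invoke any exponential-decay argument or any Meyers / annealed higher-integrability result in this lemma. After writing the Schur complement and obtaining $-\nabla\cdot(\mathbf{A}\nabla\mathbf{q}_{T,j})=-\nabla\cdot(\mathbf{A}e_j\chi_T)+\mathcal{I}_{H,D_T}^t p_{T,j}$, the task is to put the functional $\mathcal{I}_{H,D_T}^t p_{T,j}$ into divergence form with a flux in $L^{9/2}$. The paper's key idea, which is missing from your plan, is to introduce an \emph{auxiliary constant-coefficient problem}: solve $-\Delta v=\mathcal{I}_{H,D_T}^t p_{T,j}$ with Dirichlet data on a ball $B_{C\ell H}\supseteq D_T$ and set $b_{T,j}:=\nabla v$. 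Then $\nabla\cdot b_{T,j}$ trivially has the right value, and crucially $b_{T,j}$ inherits $H^1$ regularity from deterministic elliptic regularity for the Laplacian; the $L^{9/2}$ bound is then just Sobolev embedding on the patch (with the scaling in the radius $\sim\ell H$ supplying the $\ell$-powers). This completely sidesteps the Meyers / Lemma~\ref{l:RegularityAnnealed} route, which would be overkill here and, as you yourself worried, is a nontrivial black box on a box domain with corners; the paper does use Lemma~\ref{l:RegularityAnnealed} — but only later, in Theorem~\ref{t:varestimate}, applied to the corrector itself, not to $b_{T,j}$.

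The second missing piece is how the $L^2$ bound on $\mathcal{I}_{H,D_T}^t p_{T,j}$ is actually obtained. You gesture at inf–sup stability, which is the right spirit (the paper phrases it as coercivity of the Schur complement $\mathcal{I}_{H,D_T}\mathcal{L}^{-1}\mathcal{I}_{H,D_T}^t$), but the decisive step is an \emph{inverse estimate on the refined submesh} $\tilde\T_H$ via Remark~\ref{r:submesh}, which converts the coercivity bound in $H^{-1}$ into an $L^2$ bound at the price of $H^{-1}$: combined with the Poincar\'e inequality on the $\ell H$-sized patch one gets $\|\mathcal{I}_{H,D_T}^t p_{T,j}\|_{L^2(D_T)}\lesssim\ell H^{-1}|T|^{1/2}$, which then feeds into the $H^2$ / $H^1$ estimates for $v$. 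No exponential decay of the corrector enters; only a plain energy estimate and Poincar\'e on $D_T$. In short: your plan correctly identifies the Schur complement reformulation and correctly senses that the naive piecewise-polynomial-plus-inverse-estimate route fails for the exponent bookkeeping, but it lacks the auxiliary-Laplacian construction of $b_{T,j}$, replaces the elementary Sobolev-embedding step with heavier machinery that is not needed (and not in fact what is proved in Lemma~\ref{l:RegularityAnnealed}, which is stated for the random coefficient, not for the flux $b_{T,j}$), and introduces exponential-decay ingredients that play no role here.
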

\begin{proof}
The proof proceeds by rewriting the defining equation of the element correctors \eqref{e:qTelementcorrELL} as a Schur complement problem.

Let $f_{T,j}:=-\nabla \cdot (\mathbf{A} e_j \chi_{T})\in H^{-1}(D_T)$.
 Let $\mathcal{L}:H^1_0(D_T)\rightarrow H^{-1}(D_T)$ be defined as 
$\mathcal{L}u:=-\nabla \cdot (\mathbf{A} \nabla u)$ 
(where the operator $\mathcal{L}$ is not to be confused with the
 upscaled operator $\boldsymbol{\mathcal A}_H$), and let
 $\mathcal{I}_{H,D_T}: H^1_0(D_T) \rightarrow \hat V_{H}(D_T)$ be
 the concatenation of extension by zero,  quasi-interpolation $I_{H}$,
 and restriction to the patch $D_T$. 
Here, by $\hat V_H(D_T)$ we denote the 
range $\mathcal{I}_{H,D_T}(H^1_0(D_T))$ of the 
operator $\mathcal{I}_{H,D,T}$. Note that 
$\hat V_H(D_T)$ is a subspace of the space of $P_1$ finite element functions on the patch $D_T$ with \emph{arbitrary boundary values},
but with zero boundary values on $\partial D\cap D_T$.
Denote by $p_{T,j}\in \hat V_{H}'(D_T)$ the Lagrange multiplier associated with the constraint $\mathcal{I}_{H,D_T} \mathbf{q}_{T,j}=0$ (note that this constraint is equivalent to $I_H \mathbf{q}_{T,j}=0$).

The element correctors $\mathbf{q}_{T,j}\in H^1_0(D_T)$ are then determined by the Schur complement problem
\begin{align}
\label{e:SchurComplement}
\begin{pmatrix}
\mathcal{L}&\mathcal{I}_{H,D_T}^t
\\
\mathcal{I}_{H,D_T}&0
\end{pmatrix}
\begin{pmatrix}
\mathbf{q}_{T,j}
\\
p_{T,j}
\end{pmatrix}
=
\begin{pmatrix}
f_{T,j}
\\
0
\end{pmatrix}.
\end{align}
By the standard theory for Schur complement problems, we have
\begin{align}
\label{e:defp}
p_{T,j} = (\mathcal{I}_{H,D_T} \mathcal{L}^{-1} \mathcal{I}_{H,D_T}^t)^{-1} \mathcal{I}_{H,D_T} \mathcal{L}^{-1} f_{T,j}.
\end{align}

By the Lax-Milgram theorem and the uniform ellipticity and boundedness of $\mathbf{A}$ the operator $\mathcal{L}:H^1_0(D_T)\rightarrow H^{-1}(D_T)$ is invertible and the operator norm of its inverse is bounded by a constant. 
Moreover, we have $\|\mathcal{L}^{-1}v\|_{H^1_0(D_T)}\approx \|v\|_{H^{-1}(D_T)}$. The latter is seen directly by $\mathcal{L}\mathcal{L}^{-1}v=v$ for all $v\in H^{-1}(D_T)$ and $\|\mathcal{L}w\|_{H^{-1}(D_T)}\lesssim \|w\|_{H^1_0(D_T)}$ for all $w\in H^1_0(D_T)$.

Set $\hat f:=\mathcal{L}^{-1} f_{T,j}$. We then have by the Poincar\'e inequality on the patch $D_T$, the bound on the operator norm of $\mathcal{L}^{-1}$, and the definition of $f_{T,j}$
\begin{align}
\|\hat f\|_{L^2(D_T)}&\lesssim \ell H \|\hat f\|_{H^1_0(D_T)}\lesssim \ell H \|f_{T,j}\|_{H^{-1}(D_T)} \leq \ell H \|\mathbf{A} e_j \chi_T\|_{L^2(D_T)}
\nonumber
\\&
\lesssim \ell H |T|^{1/2}.
\label{e:EstimateL2f}
\end{align}

Reformulating \eqref{e:defp}, $p_{T,j}$ is given by the solution to the equation
\begin{align}
\label{e:pweakform}
\langle \mathcal{L}^{-1} \mathcal{I}_{H,D_T}^t p_{T,j} , \mathcal{I}_{H,D_T}^t w \rangle
= \langle \hat f, \mathcal{I}_{H,D_T}^t w \rangle
\quad\quad\quad\quad\text{for all }w\in \hat V_H'(D_T). 
\end{align}

The quadratic form associated with the operator $\mathcal{I}_{H,D_T} \mathcal{L}^{-1} \mathcal{I}_{H,D_T}^t:\hat V_{H}'(D_T)\rightarrow \hat V_{H}(D_T)$ is coercive. This follows from the energy estimate
\begin{equation}
\label{e:coer}
\begin{aligned}
\langle \mathcal{L}^{-1} \mathcal{I}_{H,D_T}^t v , \mathcal{I}_{H,D_T}^t v \rangle
&
=\langle \mathcal{L}^{-1} \mathcal{I}_{H,D_T}^t v , \mathcal{L} \mathcal{L}^{-1} \mathcal{I}_{H,D_T}^t v \rangle
\\&
= \int_{D_T} \mathbf{A}\nabla (\mathcal{L}^{-1}\mathcal{I}_{H,D_T}^t v) \cdot \nabla (\mathcal{L}^{-1}\mathcal{I}_{H,D_T}^t v) \,dx
\\&
\gtrsim \int_{D_T} |\nabla (\mathcal{L}^{-1}\mathcal{I}_{H,D_T}^t v)|^2 \,dx
\\&
=\|\mathcal{L}^{-1}\mathcal{I}_{H,D_T}^t v\|_{H^1_0(D_T)}^2
\gtrsim \|\mathcal{I}_{H,D_T}^t v\|_{H^{-1}(D_T)}^2
\end{aligned}
\end{equation}
where in the first estimate the lower bound from (A1) has been used.
Thus, the Lax-Milgram theorem yields the existence of a unique solution $p_{T,j}$ to the problem \eqref{e:pweakform}. 
We furthermore note that Remark~\ref{r:submesh} 
(with $\tilde V_H$ defined with respect to two barycentric
refinements) imply
\begin{align*}
\|\mathcal{I}_{H,D_T}^t p_{T,j}\|_{L^2(D_T)}&= \sup_{0\neq v\in H^1_0(D_T)} \frac{\langle\mathcal{I}_{H,D_T}^t p_{T,j} ,v \rangle}{\|v\|_{L^2(D_T)}}
= \sup_{0\neq v\in H^1_0(D_T)} \frac{\langle p_{T,j} ,\mathcal{I}_{H,D_T}v\rangle}{\|v\|_{L^2(D_T)}}
\\&
\lesssim \sup_{0\neq \tilde v\in \tilde V_{H}(D_T)} \frac{\langle \mathcal{I}_{H,D_T}^t p_{T,j} ,\tilde v \rangle}{\|\tilde v\|_{L^2(D_T)}}
.
\end{align*}
A standard inverse estimate for finite element functions
on the submesh therefore yields 
\begin{align*}
\|\mathcal{I}_{H,D_T}^t p_{T,j}\|_{L^2(D_T)}
&
\lesssim
\sup_{0\neq v_H\in \tilde V(D_T)} \frac{\langle \mathcal{I}_{H,D_T}^t p_{T,j} ,\tilde v \rangle}{H\|\tilde v\|_{H^1_0(D_T)}}
\\
&\lesssim \sup_{0\neq v \in H^1_0(D_T)} \frac{\langle \mathcal{I}_{H,D_T}^t p_{T,j} ,v \rangle}{H\|v\|_{H^1_0(D_T)}}=H^{-1} \|\mathcal{I}_{H,D_T}^t p_{T,j}\|_{H^{-1}(D_T)}.
\end{align*}
In combination with \eqref{e:coer}, \eqref{e:pweakform} and \eqref{e:EstimateL2f} this implies 
\begin{align}
\label{e:Boundp}
\|\mathcal{I}_{H,D_T}^t p_{T,j}\|_{L^2(D_T)}\lesssim \ell H^{-1}|T|^{1/2}.
\end{align}

In total, from \eqref{e:SchurComplement} we see that the element correctors $\mathbf{q}_{T,j}$ solve an equation of the form
\begin{align*}
-\nabla \cdot (\mathbf{A} \nabla \mathbf{q}_{T,j})=-\nabla \cdot (\mathbf{A} e_j \chi_T) + \mathcal{I}_{H,D_T}^t p_{T,j}.
\end{align*}
We now claim that this may be rewritten as
\begin{align*}
-\nabla \cdot (\mathbf{A} \nabla \mathbf{q}_{T,j})=-\nabla \cdot (\mathbf{A}  e_j\chi_T + b_{T,j})
\end{align*}
for some $b_{T,j}$ with
\begin{align*}
\int_{D_T} |b_{T,j}|^{9/2} \,dx \lesssim \ell^8 |T|.
\end{align*}
To see this, one may for example choose $b_{T,j}:=\nabla v$ for $v$ solving $-\Delta v = \mathcal{I}_{H,D_T}^t p_{T,j}$ with homogeneous Dirichlet boundary conditions on a ball $B_{C\ell H}(y)$ which contains $D_T$ (where we extend $\mathcal{I}_{H,D_T}^t p_{T,j}$ to $B_{C\ell H}(y)$ by zero outside of $Q$).
Using elliptic regularity theory and \eqref{e:Boundp} one then has
\begin{align*}
\int_{D_T} |D^2 v|^2 \,dx \lesssim \|\mathcal{I}_{H,D_T}^t p_{T,j}\|_{L^2(D_T)}^2\lesssim \ell^2 H^{-2} |T|
\end{align*}
as well as $\int_{D_T} |\nabla v|^2 \,dx \lesssim \ell^2 H^2\|\mathcal{I}_{H,D_T}^t p_{T,j}\|_{L^2(D_T)}^2 \lesssim \ell^4 |T|$. Finally, the Sobolev embedding and a scaling argument imply $\int_{D_T} |b_{T,j}|^{9/2} \,dx\lesssim \ell^{8} |T|$.
\end{proof}

\subsubsection{The proof of Theorem~\ref{t:gammaestimate}}\label{sss:proofMainThm}
\begin{proof}[Proof of Theorem~\ref{t:gammaestimate}]
 Denote for a given $T\in\T_H$, the index set
$J:=\{K\in\T_H : K\cap\nei^\ell(T)\neq\emptyset\}$
and, for $K\in J$,
$\boldsymbol{v}_K:=|T|(\boldsymbol{\mathcal A}_H|_{T,K}-\bar{ \mathcal A}_H|_{T,K})$.
Then,
$$
\boldsymbol{X}(T)=\max_{K\in J} |\boldsymbol{v}_K|
$$
and elementary arguments show that
\begin{align*}
\mathbb E[\boldsymbol{X}(T)^2]
\leq
\mathbb E\bigg[\sum_{K\in J} |\boldsymbol{v}_K|^2\bigg]
\leq
\card(J)\max_{K\in J} \mathbb E[|\boldsymbol{v}_K|^2].
\end{align*}
Theorem~\ref{t:varestimate} shows that 
$$
\mathbb E[|\boldsymbol{v}_K|^2]
\lesssim 
H^2 + \ell^8 \bigg(\frac{\varepsilon}{H}\bigg)^{d}.
$$
This and $\card(J)\lesssim \ell^d$ by quasi-uniformity of the mesh imply
\begin{align*}
\max_{T\in\T_H} \Big( \sqrt{\mathbb{E}[\boldsymbol{X}(T)^2]}\Big)
\lesssim
\sqrt{\card(J)}
\left(H +\ell^4 \left(\frac{\varepsilon}{H}\right)^{d/2}\right)
\lesssim
\ell^{4+d/2} 
\left(H + \left(\frac{\varepsilon}{H}\right)^{d/2}\right).
\end{align*}
\end{proof}

\section*{Acknowledgments}
This work was initiated while the authors enjoyed
the kind hospitality of the Hausdorff Institute for Mathematics 
in Bonn during the trimester program \emph{Multiscale Problems: Algorithms, Numerical Analysis and Computation}. D. Peterseim would like to acknowledge the kind hospitality of the Erwin Schr\"odinger International Institute for Mathematics and Physics (ESI) where parts of this research were developed under the frame of the Thematic Programme \emph{Numerical Analysis of Complex PDE Models in the Sciences}. D.~Peterseim acknowledges funding from the European Research Council (ERC) under the European Union's Horizon 2020 research and innovation programme (Grant agreement No. 865751).

{\footnotesize
\bibliographystyle{plain}
\bibliography{homcoeff}
}

\end{document}